\author{Andrew Soto Levins}
\author{Prashanth Sridhar}
\newcommand{\Addresses}{{
	\vskip\baselineskip
  	\footnotesize
  	\noindent \textsc{Department of Mathematics and Statistics, Texas Tech University} \\ \textsc{Department of Mathematics and Statistics, Auburn University} \par\nopagebreak
	\noindent \textit{E-mail addresses:} \texttt{ansotole@ttu.edu, pzs0094@auburn.edu}
 }}
\numberwithin{equation}{section}
\newtheorem{lemma}[equation]{Lemma}
\newtheorem{lem}[equation]{Lemma}
\newtheorem{theorem}[equation]{Theorem}
\newtheorem{prop}[equation]{Proposition}
\newtheorem{claim*}{Claim}
\theoremstyle{definition}
\newtheorem{dfn}[equation]{Definition}
\newtheorem{constr}[equation]{Construction}
\newtheorem{setup}[equation]{Setup}
\theoremstyle{remark}
\newtheorem{notation}[equation]{Notation}
\newtheorem{remark}[equation]{Remark}
\newtheorem{rem}[equation]{Remark}
\newcommand{\mfrak}[1]{\mathfrak{#1}}
\newcommand{\msf}[1]{\mathsf{#1}}
\renewcommand{\H}{\mathrm{H}}
\newcommand{\m}{\mfrak{m}}
\newcommand{\n}{\mfrak{n}}
\newcommand{\injdim}{\operatorname{inj\,dim}}
\newcommand{\projdim}{\operatorname{proj\,dim}}
\newcommand{\flatdim}{\operatorname{flat\,dim}}
\newcommand{\depth}{\operatorname{depth}}
\newcommand{\amp}{\operatorname{amp}}
\newcommand{\Hom}{\operatorname{Hom}}
\newcommand{\RHom}{\mathrm{RHom}}
\newcommand{\Lotimes}{\otimes^\msf{L}}
\newcommand{\D}{\msf{D}}
\newcommand{\del}{\partial}
\newcommand{\Mod}{\operatorname{Mod}}
\def\mod{\operatorname{mod}}
\newcommand{\im}{\operatorname{Im}}
\newcommand{\dGamma}{\mathbf{R}\Gamma}
\def\nc{\newcommand}
\nc{\on}{\operatorname}
\nc{\bideg}{\on{bideg}}
\nc{\xra}{\xrightarrow}
\def\phi{\varphi}
\nc\cB{\mathcal{B}}
\def\bu{\bullet}
\def\D{\on{D}}
\def\Db{\D^{\on{b}}}
\def\Df{\D^{\on{f}}}
\nc{\into}{\hookrightarrow}
\nc{\onto}{\twoheadrightarrow}
\nc{\LL}{\mathbf{L}}
\nc{\RR}{\mathbf{R}}
\nc{\Perf}{\on{Perf}_{\on{gr}}}
\nc{\nat}{\natural}
\nc{\tors}{\on{tors}}
\nc{\Tors}{\on{Tors}}
\def\mod{\on{mod}}
\def\Mod{\on{Mod}}
\nc{\qgr}{\on{qgr}}
\nc{\Qgr}{\on{Qgr}}
\nc{\fQgr}{\on{Qgr}^{\on{f}}}
\nc{\colim}{\on{colim}}
\def\Z{\mathbb{Z}}
\nc{\Ext}{\on{Ext}}
\nc{\om}{\omega}
\nc{\w}{\widetilde}
\nc{\PP}{\mathbb{P}}
\nc{\mf}{\on{mf}}
\nc{\OO}{\mathcal{O}}
\nc{\Proj}{\on{Proj}}
\nc{\Qcoh}{\on{Qcoh}}
\nc{\coh}{\on{coh}}
\nc{\Tor}{\on{Tor}}
\nc{\Modf}{\Mod^{\on{f}}}
\nc{\ce}{\coloneqq}
\nc{\Com}{\on{Com}}
\nc{\A}{\mathcal{A}}
\nc{\B}{\mathcal{B}}
\nc{\C}{\mathcal{C}}
\nc{\I}{\mathcal{I}}
\nc{\M}{\mathcal{M}}
\nc{\Sh}{\on{Sh}}
\nc{\QCoh}{\on{Qcoh}}
\nc{\Coh}{\on{coh}}
\nc{\fQCoh}{\QCoh^{\on{f}}}
\nc{\ov}{\overline}
\nc{\End}{\on{\underline{End}}}
\def\MR#1{}
\nc{\Qgrf}{\Qgr^{\on{f}}}
\nc{\uHom}{\underline{\Hom}}
\nc{\Inj}{\mathrm{Inj}}
\nc{\proj}{\mathrm{Proj}}
\nc{\spec}{\mathrm{Spec}}
\nc{\xla}{\xleftarrow}
\nc{\Dqgr}{\D_{\qgr}}
\nc{\DQgr}{\D_{\Qgr}}
\nc{\cK}{\mathcal{K}}
\nc{\from}{\leftarrow}
\nc{\cd}{\on{cd}}
\nc{\N}{\mathcal{N}}
\begin{document}
\title{Module-Theoretic characterizations of  Gorenstein morphisms}

\begin{abstract}

The Gorenstein property in local algebra admits several characterizations via its module category. The goal of this paper is to collect and generalize such characterizations to the relative setting, i.e., to Gorenstein morphisms as defined by \cite{AF}. We achieve this by proving these characterizations more generally for graded-commutative Gorenstein dg-algebras.

\end{abstract}

\thanks{{\em Mathematics Subject Classification} 2020: 13H10.}

\numberwithin{equation}{section}

\maketitle

\setcounter{section}{0}

\section{Introduction}

The Gorenstein property is, roughly speaking, the algebraic analog of spaces with Poincar\'{e} duality. When this property is available, there are remarkable duality results in varying contexts: in cohomology theories, free resolutions and invariant theory to name a few, see for example \cite{SD_1960-1961__14_1_A2_0,bass, Hartshorne1967-me,stanley,gorenstein_spaces}. General duality statements are often much simpler in this case - for instance, the dualizing complex of a Gorenstein scheme is a line bundle. The Gorenstein property results in many nice consequences for the module category, and in return, it can be characterized purely in terms of its module category, see \cite{bass_injective,bass, Auslander, Peskine_Szpiro, syzygies,buchweitz, MCM_approximations,Christensen2000-fv, Avramov_martsinkovsky} for numerous such applications. This property has also played an important role in noncommutative algebra and in particular in the representation theory of Artin algebras. We refer the reader to \cite{huneke_gorenstein} for more about its utility. 

\par Avramov-Foxby in \cite{AF} developed a relative theory of the Gorenstein property in local algebra: a morphism of commutative Noetherian local rings $(R,\m)\to (S,\n)$ is Gorenstein if $\flatdim_R(S)<\infty$ and there exists an integer $d\in \mathbb{Z}$ such that the Bass numbers (\cite{Bass_Gorenstein})  satisfy $\mu^i_R(\m,R)=\mu^{i+d}_S(\n,S)$ for all $i\in \mathbb{Z}$. Since Gorenstein rings can be characterized by the property that they have the nicest Bass numbers, Avramov-Foxby's definition is taking a relative perspective by asking for the nicest relationship between Bass numbers along a ring homomorphism. A local ring $S$ is Gorenstein if and only if the structure morphism $\mathbb{Z}_{(p)}\to S$ is Gorenstein, where $p$ is the characteristic of the residue field of $S$ (\cite{AF}, Proposition 2.1). This definition is also an extension of Grothendieck's definition of a Gorenstein morphism i.e., flat morphisms with Gorenstein fibers.   

\par On the other hand, the Gorenstein condition has been developed and used in higher algebra, see \cite{gorenstein_spaces, Frankild2003,FIJ,DGI_duality0, yekutieli2013duality,SAG_Lurie, Shaul_CM, hu2023gdimensionsdgmodulescommutativedgrings, BROWNSRIDHAR, brown2024serredualitydgalgebras}. The connection between these two generalizations is that the relative theory can be characterized in terms of the corresponding notion in higher algebra, see \cite[Theorem 2.2]{AF}. The goal of this paper is to collect and generalize to the relative setting, several module-theoretic characterizations of the Gorenstein condition in local algebra. We achieve this by exploiting the recent developments in the theory of differentially graded (dg) algebras - more specifically in the theory of non-positive, graded commutative, dg-algebras in \cite{Shaul_CM, minamoto, Minamoto2,BSSW,local_CM_modules,hu2023gdimensionsdgmodulescommutativedgrings}. Below are two examples of classical module theoretic statements this paper generalizes to the relative setting:

\begin{theorem}\cite{Foxby_iso}\label{fact_1}
    A commutative Noetherian local ring $R$ is Gorenstein if and only if there exists a non-zero finitely generated $R$-module $M$ such that $\injdim_R(M)<\infty$ and $\projdim_R(M)<\infty$.
\end{theorem}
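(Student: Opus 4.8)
The plan is to prove both implications separately, with the forward direction immediate and the converse carrying all the content. For the forward direction, if $R$ is Gorenstein I simply take $M = R$: one has $\projdim_R(R) = 0 < \infty$, and the defining characterization of a Gorenstein local ring is precisely that $\injdim_R(R) = \dim R < \infty$. So a witness module exists trivially.

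For the converse, suppose $M \neq 0$ is finitely generated with $\projdim_R(M) < \infty$ and $\injdim_R(M) < \infty$. The key move is to pass to the derived category and exploit that finite projective dimension makes $M$ a perfect complex. I set $N \ce \RHom_R(M, R)$; since $M$ is perfect, $N$ is again perfect, nonzero, and biduality holds, giving $M \simeq \RHom_R(N, R)$. Hom--tensor adjunction then yields
\[
\RHom_R(\k, M) \simeq \RHom_R(\k, \RHom_R(N, R)) \simeq \RHom_R(N \Lotimes_R \k,\, R).
\]
Because $N$ is perfect, $N \Lotimes_R \k$ is a bounded complex of $\k$-vector spaces with finite-dimensional cohomology, hence quasi-isomorphic to a finite direct sum of shifts of $\k$, and it is nonzero since $N$ is a nonzero perfect complex. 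Consequently $\RHom_R(\k, R)$ occurs, up to shift, as a genuine direct summand of $\RHom_R(\k, M)$, i.e.\ $\RHom_R(\k,M) \simeq \bigoplus_j \RHom_R(\k,R)[j]^{\oplus n_j}$ with not all $n_j$ zero.

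The finiteness of $\injdim_R(M)$ forces $\Ext^i_R(\k, M) = 0$ for $i \gg 0$, so the displayed isomorphism transfers this vanishing to $\RHom_R(\k,R)$, giving $\Ext^i_R(\k, R) = 0$ for $i \gg 0$ as well. By the standard test for injective dimension of a finitely generated module over a Noetherian local ring against the residue field, this yields $\injdim_R(R) < \infty$, so $R$ is Gorenstein. The step I expect to require the most care is the perfectness and biduality bookkeeping: confirming $N \Lotimes_R \k \neq 0$ and tracking shifts so that a nonzero summand $\RHom_R(\k, R)$ truly survives, since that summand is exactly what carries the finiteness conclusion from $M$ back to $R$. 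As a purely classical alternative one can instead combine the Bass formula $\injdim_R(M) = \depth R$ with the Auslander--Buchsbaum formula, but the perfect-complex argument above is the version I would favor, as it is the one that generalizes cleanly to the graded-commutative Gorenstein dg-algebra setting of this paper.
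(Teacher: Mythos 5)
Your proof is correct and is essentially the paper's own approach: the paper merely cites Foxby for this classical statement, but its dg-level generalization---the computation in \Cref{prop:gdim} that drives the equivalence $(1) \iff (3)$ of \Cref{thm:main}---is exactly your argument of combining biduality of a perfect complex with Hom-tensor adjunction to exhibit $\RHom_R(\k,R)$, up to shift, as a summand of $\RHom_R(\k,M)$, and then transferring cohomological boundedness via the Bass criterion. The Nakayama-type nonvanishing of $N \Lotimes_R \k$ that you flag as the delicate point is likewise the (implicit) nondegeneracy step in the paper's computation, so nothing essential separates the two arguments.
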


\begin{theorem}\label{fact_2}
   A commutative Noetherian local ring $R$ is Gorenstein if and only if there exists a non-zero finitely generated $M$ such that $\injdim(M)<\infty$, $M$ has finite Auslander bound (\Cref{def:Auslander_Bounds}) and $\Ext^i(M,R)=0$ for $i\gg 0$.
\end{theorem}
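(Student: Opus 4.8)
\emph{Strategy.} The forward direction is immediate with $M=R$: one has $\injdim_R R<\infty$ by definition of Gorenstein, $\Ext^i_R(R,R)=0$ for $i>0$, and the Auslander bound of $R$ is $0$ since $\Tor^R_i(R,-)=0$ for $i>0$. All the content lies in the converse, which I would reduce to a finiteness statement about the canonical module. Since every hypothesis and the conclusion are stable under $\m$-adic completion, I may assume $R$ is complete. Because $M\ne0$ is finitely generated with $\injdim_R M<\infty$, a theorem of Bass forces $R$ to be Cohen--Macaulay, so its dualizing complex is $\omega[\dim R]$ for a canonical module $\omega$, and Foxby equivalence applies: $M$ lies in the Bass class, $N\ce\RHom_R(\omega,M)$ is a nonzero perfect complex (finite projective dimension), and $M\simeq\omega\Lotimes_R N$.

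The next step is to transport the two remaining hypotheses from $M$ to $\omega$ and then study $\omega$ alone. Writing $N^{*}=\RHom_R(N,R)$, again perfect, the isomorphism $M\simeq\omega\Lotimes_R N$ yields $\RHom_R(M,R)\simeq N^{*}\Lotimes_R\RHom_R(\omega,R)$. The amplitude identity $\sup(N^{*}\Lotimes_R X)=\projdim_R N^{*}+\sup X$, valid for $X$ with finitely generated cohomology, then shows that $\RHom_R(M,R)$ is bounded if and only if $\RHom_R(\omega,R)$ is; hence $\Ext^i_R(\omega,R)=0$ for $i\gg0$. The same identity, applied to $M\Lotimes_R Y\simeq N\Lotimes_R(\omega\Lotimes_R Y)$ for finitely generated $Y$, gives $\sup(\omega\Lotimes_R Y)=\sup(M\Lotimes_R Y)-\projdim_R N$ whenever these are finite, so a uniform bound on the vanishing of $\Tor^R_\bullet(M,-)$ descends to one for $\Tor^R_\bullet(\omega,-)$; thus $\omega$ inherits a finite Auslander bound.

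It now suffices to prove that $\omega$ has finite Gorenstein dimension. Indeed $\operatorname{G-dim}_R\omega=\depth R-\depth_R\omega=0$ by the Auslander--Bridger formula (as $\omega$ is maximal Cohen--Macaulay), so $\omega$ is totally reflexive; and by the standard fact that a Cohen--Macaulay local ring with canonical module $\omega$ is Gorenstein as soon as $\operatorname{G-dim}_R\omega<\infty$, this forces $R$ to be Gorenstein. (Equivalently, once $\projdim_R\omega<\infty$ is known, $\omega$ has finite projective \emph{and} finite injective dimension, and \Cref{fact_1} applies.) The content therefore concentrates in the implication I would isolate as the key lemma, applied to $L=\omega$: \emph{if $L$ is finitely generated with $\Ext^i_R(L,R)=0$ for $i\gg0$ and finite Auslander bound, then $\operatorname{G-dim}_R L<\infty$.}

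The key lemma is the main obstacle. Passing to a sufficiently high syzygy $L'=\Omega^s_R L$, with $s\ge\sup\{i:\Ext^i_R(L,R)\ne0\}$, makes $\Ext^i_R(L',R)=0$ for all $i>0$, and $L'$ still has finite Auslander bound; by the characterization of finite Gorenstein dimension as derived $R$-reflexivity it then remains to show that $L'$ is reflexive and that the corresponding vanishing holds for its dual. This is exactly the place where eventual $\Ext$-vanishing must be upgraded to genuine total reflexivity, and it \emph{fails} without the Auslander bound (there exist non-reflexive modules with $\Ext^{>0}_R(-,R)=0$ over non-Gorenstein rings); the finite Auslander bound supplies the missing rigidity. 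Concretely, I would feed the uniform $\Tor$-bound for the Auslander transpose $\mathrm{Tr}\,L'$ into the canonical exact sequence $0\to\Ext^1_R(\mathrm{Tr}\,L',R)\to L'\to {L'}^{**}\to\Ext^2_R(\mathrm{Tr}\,L',R)\to0$ and its higher analogues, forcing the obstruction modules to vanish. Establishing this upgrade in the stated generality is the delicate point, and it is precisely what the dg-algebra framework of the paper is designed to handle; the classical statement then follows by specializing to $R$.
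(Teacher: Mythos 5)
Your forward direction is fine apart from a tell-tale slip: the Auslander bound of \Cref{def:Auslander_Bounds} is defined by $\Ext$ \emph{out of} $M$, not by $\Tor$, so the justification ``$B(R)=0$ since $\Tor^R_i(R,-)=0$'' cites the wrong invariant (the conclusion $B(R)=0$ is still true, since $\projdim_R R=0$). The converse, however, has a genuine gap: by your own account everything before the last paragraph is a reduction to the ``key lemma'' ($L$ finitely generated, $\Ext^i_R(L,R)=0$ for $i\gg 0$, $B(L)<\infty$ imply $\mathrm{Gdim}_R L<\infty$), and that lemma is never proved --- the transpose/$\Tor$ sketch is not an argument, and again conflates $\Tor$-bounds with the $\Ext$-based bound $B(L)$. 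Worse, the reduction gains nothing: after completion and Foxby equivalence your $\omega$ satisfies $\injdim_R\omega<\infty$, $B(\omega)<\infty$ and $\Ext^{i\gg 0}_R(\omega,R)=0$, which are \emph{exactly} the hypotheses of \Cref{fact_2} with $M=\omega$. You have reduced the theorem to an instance of itself. Moreover, your key lemma as stated \emph{discards} the finite-injective-dimension hypothesis, making it a strictly stronger claim that is not known (and may be false): there exist non-reflexive modules with $\Ext^{>0}_R(-,R)=0$, and the known rigidity results in this area (Wei's, and the paper's) all require $\Ext$-vanishing against $M$ itself as well, which is precisely what finite injective dimension supplies. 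Two further steps of the reduction are also shaky: stability of $B(M)<\infty$ under completion is unclear, because $B_{\widehat{R}}(\widehat{M})$ is a supremum over finitely generated $\widehat{R}$-complexes, which need not descend to $R$; and the appeal to Auslander--Bridger is circular as written, since that formula presupposes $\mathrm{Gdim}_R\omega<\infty$.

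For contrast, the paper's proof (via \Cref{thm:main}, (1)$\iff$(4)) needs no completion, no Bass conjecture, no Cohen--Macaulayness and no canonical module; it attacks the rigidity directly. The core is \Cref{lem:fpd}: if $B(M)$, $P_A(M,A)$ \emph{and} $P_A(M,M)$ are all finite, then $\projdim_A(M)<\infty$; here $P_A(M,M)<\infty$ comes for free from $\injdim_A(M)<\infty$ --- this is the exact role of the injective-dimension hypothesis your key lemma drops. The mechanism: along an sppj (syzygy-type) resolution with constant sup one gets $\Ext^1_A(M_n,M_{n+1})\cong\Ext^i_A(M_n,M_{n+i})$ for all $i$, each $P_A(M_n,M_{n+i})$ is finite by two-out-of-three, so nonvanishing of $\Ext^1_A(M_n,M_{n+1})$ would force $B(M_n)=\infty$; hence the resolution splits and $\projdim_A(M)<\infty$. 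Gorensteinness then follows from finite projective plus finite injective dimension (\Cref{prop:gdim}, the dg-form of \Cref{fact_1}); see \Cref{prop:Gor_auslander_bounds}. If you want a purely classical proof, the efficient route is to prove this Wei-type splitting argument for $A=R$, retaining the hypothesis $\injdim_R M<\infty$ throughout, rather than routing through $\omega$.
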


The Auslander bound is an invariant arising from an important conjecture of Auslander concerning finite dimensional algebras. It was developed by Wei in \cite{Auslander_bounds_wei} and subsequently by many others. For its connections with the Auslander-Reiten conjecture, see \cite{Auslander_bounds_wei}. Note that the characterization in \Cref{fact_2} apriori appears weaker than the one in \Cref{fact_1}. Several such module-theoretic characterizations are scattered across the literature. Our relative statement is as follows:

\begin{theorem}[Special case of \Cref{thm:main}] \label{theorem1}
   Let $\Phi: (R,\m,k)\to (S,\n,l)$ be a map of commutative noetherian local rings such that $\flatdim_R(S)<\infty$. Let $\mathrm{F}:=k\Lotimes_R S$ be the derived fiber (\Cref{def:derived_fiber}). Then, the following are equivalent:

   \begin{enumerate}
       \item $\Phi$ is Gorenstein.

       \item There exists $0\not\simeq M\in \Db(\mathrm{F})$ such that $\injdim_{\mathrm{F}}(M)<\infty$ (\Cref{def:injective}) and $\projdim_{\mathrm{F}}(M)<\infty$ (\Cref{def:projective}).

            \item There exists $0\not\simeq M\in \Db(\mathrm{F})$ such that (a) $\injdim_{\mathrm{F}}(M)<\infty$ (\Cref{def:injective}) \\ (b) $\RHom_{\mathrm{F}}(M,\mathrm{F})\in \Db(\mathrm{F})$ and (c) $B(M)<\infty$ (\Cref{def:Auslander_Bounds}).

       \item For every $0\not\simeq M\in \Db(\mathrm{F})$, there exists $0\not\simeq N_M\in \Db(\mathrm{F})$ such that $N_M\in \langle M\rangle \cap \langle \mathcal{G}_0\rangle$, where $\mathcal{G}_0$ is the full subcategory of Gorenstein projective dg-modules (\Cref{def:gorenstein_projective}) and $\langle  -\rangle$ denotes the thick subcategory generated by a collection of objects.
       
       \item For every $M\in \Db(\mathrm{F})$ such that $\injdim_{\mathrm{F}}(M)<\infty$ (\Cref{def:injective}), there exists $0\not \simeq N_M\in \langle M\rangle \cap \langle l\rangle $ such that $B(N_M)<\infty$ and $\RHom_{\mathrm{F}}(M,\mathrm{F})$ is cohomologically bounded above, where $\langle  -\rangle$ denotes the thick subcategory generated by a collection of objects.

        \item There exists a dg module $0\not\simeq M\in \mathrm{MCM}(\mathrm{F})$ (\Cref{def:CM_modules}) with $\injdim_{\mathrm{F}}(M)<\infty$ (\Cref{def:injective}), $B(M)<\infty$ (\Cref{def:Auslander_Bounds}), and $\sup(M)=\sup\RHom_{\mathrm{F}}(M,\mathrm{F})=0$.

        \item Let $\underline{x}=x_1,\dots,x_n$ be a generating set for $\n$, and set $K:=\mathrm{F}//\underline{x}$ (\Cref{derived_KC}). There exists $0\not\simeq M\in \Db(K)$ with semifree resolution $G\xra{\simeq} M$ such that 

          \begin{enumerate}
              \item $G/F^nG$ has cohomology concentrated in a single degree for some integer $n$.

              \item $\RHom_K(M,K)\in \Db(K)$.
          \end{enumerate}
    \end{enumerate}

\end{theorem}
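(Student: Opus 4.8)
The statement is an instance of \Cref{thm:main}, so the plan is to realize it intrinsically on the derived fiber and then invoke the general theorem. First I would observe that $\mathrm{F} = k \Lotimes_R S$ is a non-positive, graded-commutative, cohomologically noetherian dg-algebra: the hypothesis $\flatdim_R(S) < \infty$ guarantees $\mathrm{F} \in \Db$ with $\mathrm{H}^0(\mathrm{F})$ noetherian, placing $\mathrm{F}$ within the scope of the general theory. Next, by \cite[Theorem 2.2]{AF}, the morphism $\Phi$ is Gorenstein if and only if $\mathrm{F}$ is a Gorenstein dg-algebra. Since conditions (2)--(7) are phrased purely in terms of $\mathrm{F}$ and its derived Koszul collapse $K = \mathrm{F}//\underline{x}$, matching hypotheses reduces the whole theorem to the equivalence of these conditions with ``$\mathrm{F}$ is Gorenstein,'' which is exactly \Cref{thm:main}.

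\textbf{The equivalence cycle.} For the substance behind \Cref{thm:main}, I would organize the implications around the duality functor $(-)^{\vee} := \RHom_{\mathrm{F}}(-,\mathrm{F})$. The implication (1) $\Rightarrow$ (2) is immediate with witness $M = \mathrm{F}$, which has projective dimension $0$ and, by definition of a Gorenstein dg-algebra, finite injective dimension over itself. The step (2) $\Rightarrow$ (3) is formal: finite projective dimension forces $M^{\vee} \in \Db(\mathrm{F})$ and a finite Auslander bound $B(M) < \infty$. The decisive implication is (3) $\Rightarrow$ (1), the dg-incarnation of \Cref{fact_2}: one must extract Gorenstein-ness of $\mathrm{F}$ from the weaker package of finite injective dimension, boundedness of $M^{\vee}$, and finiteness of $B(M)$. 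I would run this by combining the injective-dimension hypothesis, which controls $\RHom_{\mathrm{F}}(l, M)$, with the Auslander bound, which makes $\Ext$-vanishing uniform, to force the dualizing dg-module of $\mathrm{F}$ to be a shift of $\mathrm{F}$.

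\textbf{The categorical, MCM, and Koszul conditions.} Conditions (4), (5), (6) I would handle via Gorenstein projective and MCM dg-modules. When $\mathrm{F}$ is Gorenstein, every object of $\Db(\mathrm{F})$ has finite Gorenstein projective dimension and hence lies in $\langle \mathcal{G}_0 \rangle$, so one may take $N_M = M$, giving (1) $\Rightarrow$ (4); the converse (4) $\Rightarrow$ (1) is the substantive direction, obtained by specializing to $M = l$ and using that a nonzero object of $\langle l \rangle$ of finite Gorenstein projective dimension forces $l$ itself to have finite Gorenstein projective dimension, which characterizes Gorenstein-ness. Condition (6) is the same phenomenon rigidified: the normalization $\sup M = \sup M^{\vee} = 0$ selects a genuine MCM representative of the self-dual witness, and (5) interpolates by joining thick-subcategory generation in $\langle M \rangle \cap \langle l \rangle$ with Auslander-bound finiteness. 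For (7) I would transport the package along $K = \mathrm{F}//\underline{x}$: killing $\n$ turns finite injective dimension into the statement that the graded pieces $G/F^nG$ of a semifree resolution eventually concentrate their cohomology in a single degree (the dg-shadow of the self-dual Gorenstein structure), while boundedness of $\RHom_K(M,K)$ records finite $G$-dimension over $K$.

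\textbf{Main obstacle.} The hard part will be the implications into (1) from the \emph{weak} hypotheses, namely (3) $\Rightarrow$ (1) and (4)/(5) $\Rightarrow$ (1). Classically these rest on depth, Krull dimension, and length counts for finitely generated modules, none of which survive verbatim for a dg-algebra with possibly unbounded cohomology. The core difficulty is to control the Auslander bound homotopically and to show that finite injective dimension of a single test object, together with the boundedness of its dual, already pins down the dualizing dg-module of $\mathrm{F}$ up to shift --- that is, to reconstruct the Bass--Foxby mechanism from the $\Ext$-vanishing pattern encoded by $B(M)$ alone, without module-length arguments.
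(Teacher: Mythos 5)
Your opening reduction is exactly the paper's proof of this statement: verify that $\mathrm{F}$ satisfies \Cref{setup} (non-positive, graded-commutative, $\H^0(\mathrm{F})=S/\m S$ noetherian local, cohomology bounded and finitely generated because $\flatdim_R(S)<\infty$), invoke \cite[Theorem 2.2]{AF} to translate ``$\Phi$ is Gorenstein'' into ``$\mathrm{F}$ is a Gorenstein dg-algebra,'' and then quote \Cref{thm:main}. Had you stopped there, the proposal would simply match the paper.

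However, your sketch of the substantive implications of \Cref{thm:main} contains a genuine error in (4) $\Rightarrow$ (1). You propose to specialize condition (4) to $M=l$ and argue that a nonzero object of $\langle l\rangle\cap\langle \mathcal{G}_0\rangle$ forces $l$ itself to have finite Gorenstein dimension. That upward transfer is false: finiteness of $\mathrm{Gdim}$ passes \emph{down} to objects of a thick subcategory generated by objects of finite $\mathrm{Gdim}$, not \emph{up} to the generator. Concretely, take $\mathrm{F}$ to be an Artinian non-Gorenstein ring $R$ concentrated in degree $0$ (e.g.\ $k[x,y]/(x^2,xy,y^2)$): then $R$ itself lies in $\langle k\rangle\cap\langle\mathcal{G}_0\rangle$ (it has finite length, so it is in $\langle k\rangle$, and it is free, so it is in $\mathcal{G}_0$), yet $R$ is not Gorenstein. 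So the instance $M=l$ of condition (4) is vacuously satisfiable over non-Gorenstein rings and can never drive the implication. The paper's route (\Cref{prop:G_0}) is structurally different: the engine is \Cref{prop:gdim}, which requires \emph{both} a test object $M$ of finite injective dimension \emph{and} a nonzero $N_M\in\langle M\rangle$ of finite $\mathrm{Gdim}$. To produce such an $M$, the paper uses Brown representability to build a co-dualizing object $E$ and sets $B:=\RHom_{\mathrm{F}}(K,E)$, where $K=\mathrm{F}//\underline{x}$, so that $B\in\Db(\mathrm{F})$ with $\injdim_{\mathrm{F}}(B)<\infty$; condition (4) is then applied to $B$, not to $l$. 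A similar remark applies to your treatment of (5) $\Rightarrow$ (1), which in the paper again routes through this finite-injective-dimension witness. Finally, for (3) $\Rightarrow$ (1) your plan to ``pin down the dualizing dg-module up to shift'' is vaguer than what is needed; the paper instead proves finiteness of $\projdim_{\mathrm{F}}(M)$ via sppj resolutions and the Auslander bound (\Cref{lem:fpd}), deduces $\mathrm{Gdim}_{\mathrm{F}}(M)<\infty$, and again concludes by \Cref{prop:gdim}, whose proof shows directly that $\sup\RHom_{\mathrm{F}}(l,\mathrm{F})<\infty$.
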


The statement (4) above extends the characterization of the Gorenstein property in terms of Gorenstein dimension and Gorenstein projective modules, (5) in terms of virtual smallness, (6) in terms of maximal Cohen-Macaulay modules and (7) in terms of the derived category of the Koszul complex associated to the maximal ideal. We prove \Cref{theorem1} by establishing the equivalence of the assertions more generally for non-positive, graded commutative, cohomologically bounded, dg-algebras (\Cref{setup}) in \Cref{thm:main}. This is indeed more general due to \cite[Theorem 2.2]{AF}. As noted earlier, the proofs of these results rely on recent developments in the theory of resolutions and homological dimensions of such dg-algebras. The classical characterizations for a Gorenstein ring $S$ can be recovered by taking $R=\mathbb{Z}_{(p)}$ and $\Phi$ to be the structure morphism in \Cref{theorem1}, where $p$ is the residue characteristic of $S$. For instance, statements (2) and (3) in \Cref{theorem1} recover \Cref{fact_1} and \Cref{fact_2} under this hypothesis.

\par The paper has two sections: in section 2, we collect all the notions from the literature we need and in section 3, we prove \Cref{thm:main}, our main result.

\subsection*{Acknowledgments.} We thank Michael K. Brown, Nawaj KC, Zach Nason, Josh Pollitz, Mark Walker and Ryan Watson for helpful conversations and comments on a preliminary draft.

\section{Preliminaries}

In this section, we collect from various sources, the definitions and notions we need from the theory of differential graded algebras.

\begin{notation}
\label{conventions}
A \emph{differential graded (dg) algebra} is a graded algebra $A = \bigoplus_{j \in \Z} A^j$ equipped with a degree $1$ $\mathbb{Z}$-linear map $\del_A$ that squares to 0 and satisfies the Leibniz rule:
$$
\del_A(xy) = \del_A(x)y + (-1)^{\deg{x}}x\del_A(y).
$$
Say $A$ is \emph{graded commutative} if $xy = (-1)^{|x||y|}yx$ for all homogeneous $x, y \in A$ and $x^2=0$ if $\deg(x)$ is odd. A \emph{morphism} of dg algebras is a degree $0$ algebra homomorphism that commutes with differentials and is a \emph{quasi-isomorphism} if it induces an isomorphism on cohomology. We denote by $A^{\nat}$ the underlying graded algebra of a dg-algebra $A$. We say a dg-algebra $A$ is non-positive if $A^i=0$ for $i>0$.
\par
Let $A$ be a dg-algebra. A right (resp. left) \emph{dg-$A$-module} is a graded right (resp. left) $A^\nat$-module $M = \bigoplus_{j \in \Z} M^j$ equipped with a degree $1$ $\mathbb{Z}$-linear map $\del_M$ that squares to 0 and satisfies the Leibniz rule:
$$
\del_M(mx) = \del_M(m)x + (-1)^{\deg(m)}m\del_A(x)
\quad
(\text{resp. }\del_M(xm) = \del_A(x)m + (-1)^{\deg(x)}x\del_M(m)).
$$

 If $A$ is graded commutative and if $M$ is a right dg $A$-module, then $M$ is also a left dg $R$-module with left action $rm \ce (-1)^{\deg(r) \deg(m)} mr$ for homogeneous elements $r \in R$ and $m \in M$. This gives $M$ a dg $A$-$A$-bimodule structure and induces an equivalence between the category of right and left dg-modules.

 \par A dg-module $M$ is said to be \emph{finitely generated} if the underlying graded $A^{\nat}$-module is finitely generated. A \emph{morphism} of dg-$A$-modules is a degree $0$ $A^{\nat}$-linear map that commutes with differentials. Such a morphism is a \emph{quasi-isomorphism} if it induces an isomorphism on cohomology. 


Given dg-$A$-modules $M$ and $N$, the tensor product $M \otimes_A N$ is a dg-$A$-module with differential $m \otimes n \mapsto d_M(m) \otimes n + (-1)^{|m|}m \otimes d_N(n)$. Similarly, given dg-$A$-modules $M$ and $N$, we may form the internal Hom object $\Hom_A(M, N)$, which is a dg-$A$-module with underlying $A^{\nat}$-module $\Hom_{A^{\nat}}(M, N)$ and differential $\alpha \mapsto d_N \alpha - (-1)^{|\alpha|}\alpha d_M$. A map in $\Hom_A(M, N)$ of degree $0$ is a cocycle if and only if it is a morphism of dg-$A$-modules.

\end{notation}

For a dg-algebra $A$, let $\Mod(A)$ denote the category of dg-$A$-modules, $\Modf(A)$ the full subcategory of $\Mod(A)$ given by dg-$A$-modules $M$ such that $H(M)$ is finitely generated over $H(A)$, and $\mod(A)$ the full subcategory of $\Modf(A)$ given by dg-$A$-modules that are finitely generated over $A$. We form the (triangulated) derived categories $\D(A)$, $\Df(A)$, and $\D^b(A)$ by inverting quasi-isomorphisms in $\Mod(A)$, $\Modf(A)$, and $\mod(A)$, respectively. For a construction of the derived category of a dg-algebra, see \cite[\href{https://stacks.math.columbia.edu/tag/09KV}{Tag 09KV}]{stacks-project}. See \cite{yekutieli_2019} for a comprehensive treatment of derived categories of dg-algebras.

\par Let $\D^+(A)$ (resp. $\D^-(A)$) denote the full subcategory of $\D(A)$ of dg-modules $M$ such that $\H^i(M)=0$ for $i\ll 0$ (resp. $\H^i(M)=0$ for $i\gg 0$). For a dg-module $M$, write 

\[\inf(M):=\inf\{i\:|\: \H^i(M)\neq 0\}.\]

\[\sup(M):=\sup\{i\:|\: \H^i(M)\neq 0\}. \]

\[\amp(M)=\sup(M)-\inf(M).\]

Given a dg-module $M$, its $i$-th cohomological shift, $M[i]$, is the dg-module with underlying module given by $M[i]^j=M^{i+j}$ and differential $d_{M[i]}=(-1)^id_M$.

\par Note that there is a natural map of dg-algebras $A\to \H^0(A)$ and $A\to \mathbf{k}$, where $\mathbf{k}$ is the residue class field of $\H^0(A)$.

\begin{remark}
\label{remark:smart}
Let $A$ be a non-positive dg-algebra and $M$ a dg-$A$-module. For $i \in \Z$, we have \emph{smart truncations}
\begin{align*}
\sigma^{\ge i} M & \ce \left( \cdots \to 0 \to M^i / \im(d_M^{i - 1}) \to M^{i+1} \to \cdots \right), \\ 
\sigma^{\le i} M & \ce \left( \cdots \to M^{i-1} \to\ker(d^i_M) \to 0 \to \cdots \right).
\end{align*}
Both $\sigma^{\ge i} M$ and $\sigma^{\le i} M$ are dg-$A$-modules and the natural map $M \to \sigma^{\ge i} M$ (resp. $\sigma^{\le i} M \to M$) induces an isomorphism on cohomology in degrees at least $i$ (resp. at most $i$). 
\end{remark}

\begin{dfn}\label{def:derived_fiber}\cite{Avramov_golod}
     Let $\Phi: (R,\m,k)\to (S,\n,l)$ be a map of commutative noetherian local rings. The \textit{derived fiber} of $\Phi$ is the non-positive, graded commutative dg-algebra $k\Lotimes_R S:= k\otimes_R G$, where $G\xra{\simeq} S$ is a quasi-isomorphism of dg-algebras over $R$ with $G$ non-positive, graded commutative and degree wise free over $R$. The derived fiber always exists and the definition is unique up to a zig-zag of quasi-isomorphisms of dg-algebras. 
\end{dfn}

\begin{dfn}
    Let $A$ be a dg-algebra. A dg-$A$-module $G$ is \emph{free} if it is isomorphic, as a dg-module, to a direct sum of copies of $A[i]$ for various $i \in \Z$.
A dg-$A$-module $G$ is called \emph{semi-free} if it can be equipped with an increasing, exhaustive filtration $F^\bu G$ by dg-submodules such that $F^i G = 0$ for $i < 0$ and each dg-module $F^i G / F^{i-1} G$ is free. Given a dg-$A$-module $M$, a \emph{semi-free resolution} of $M$ is a quasi-isomorphism $G \xra{\simeq} M$, where $G$ is semi-free. It is well-known that every dg-$A$-module $M$ admits a semifree resolution; see for example \cite[Chapter 5, Theorem 2.2]{avramov1997differential}.
\end{dfn}

We now recall a dg-analog of projective resolutions due to Minamoto in \cite{minamoto}. Let $\mathcal{P}=\mathrm{add}(A)$ be the additive closure of $A$ in $\D(A)$.

\begin{dfn}[\cite{minamoto} - Definitions 2.12 and 2.17]\label{def:sppj}
Let $A$ be a non-positive dg-algebra and $M\in \D^-(A)$.

\begin{enumerate}
    \item A sppj morphism is a morphism $f\in \Hom_{\D(A)}(P,M)$ such that (i) $P\in \mathcal{P}[-\sup(M)]$ and (ii) $\H^{\sup}(f)$ is surjective.

    \item A sppj resolution $P$ of $M$ is a sequence of exact triangles for each $i\geq 0$ $
\left[M_{i+1} \xra{g_{i+1}} P_i \xra{f_i} M_{i} \to \right]$ such that $M_0=M$ and $f_i$ is an sppj morphism.
 
\end{enumerate}
    
\end{dfn}

\begin{rem}
    Note that in an sppj resolution we have $\sup(M_{i+1})=\sup(P_{i+1})\leq \sup (P_i)=\sup(M_i)$ for all $i\geq 0$.
\end{rem}

We now recall the definition of projective and injective dimension for dg-modules.

\begin{dfn}[\cite{BSSW}]\label{def:projective}
    Let $A$ be a non-positive dg-algebra and $M\in \D(A)$. The \textit{projective dimension of $M$} is defined as

    \[\projdim_A(M):=\inf\{n\in \mathbb{Z}\:|\: \Ext^i_A(M,N)=0 \text{ for any } N\in \Db(A) \text{ and any } i>n+\sup(N) \}.\]
\end{dfn}

\begin{dfn}[\cite{BSSW}]\label{def:injective}
    Let $A$ be a non-positive dg-algebra and $M\in \D(A)$. The \textit{injective dimension of $M$} is defined as

    \[\injdim_A(M):=\inf\{n\in \mathbb{Z}\:|\: \Ext^i_A(N,M)=0 \text{ for any } N\in \Db(A) \text{ and any } i>n-\inf(N) \}.\]
\end{dfn}

\begin{rem}
    In \cite{yekutieli2013duality}, another definition of projective and injective dimension was made, denoted $\mathrm{pd}_A(M)$ and $\mathrm{id}_A(M)$ respectively. The relationship between the two definitions is as follows: (a) if $M\in \D^-(A)$, then $\projdim_A(M)=\mathrm{pd}_A(M)-\sup(M)$ (b) if $M\in \D^+(A)$, then $\injdim_A(M)=\mathrm{id}_A(M)+\inf(M)$.
\end{rem}

 \begin{setup}\label{setup}
   All the main results in this paper are written in the following setup. The symbol $A$ will denote a non-positive graded commutative dg-algebra i.e a graded commutative dg-algebra such that $A^i=0$ for $i>0$. Further, assume $(\H^0(A),\m)$ is Noetherian, local and that $\H(A)$ is finitely generated over $\H^0(A)$.
\end{setup}

We recall a notion of a dualizing dg-module due to Yekutieli.

\begin{dfn}[\cite{yekutieli2013duality}]\label{def:dualzing1}
Let $A$ be a non-positive, graded commutative dg-algebra. An object $R\in \D^+(A)$ is \textit{dualizing} if 

\begin{enumerate}
    \item for each $i\in \mathbb{Z}$, $\H^i(R)$ is a finitely generated $\H^0(A)$ module. 

    \item $\injdim_A(R)<\infty$.

    \item the homothety morphism $A\to \RHom_A(R,R)$ is an isomorphism in $\D(A)$.
\end{enumerate}
\end{dfn}

In \cite{FIJ}, another definition of a dualizing dg-module was studied. An object $M\in \D(A)$ is $R$-reflexive for an object $R\in \D(A)$ if the derived evaluation morphism of $M$ with respect to $R$ is a quasi-isomorphism. If $M$ is $A$-reflexive, we say $M$ is reflexive.

\begin{dfn}[\cite{FIJ}]\label{def:dualizing2}
Let $A$ be as in \Cref{setup}. An object $R\in \D^+(A)$ is \textit{dualizing} if for all $M\in \Db(A)$

\begin{enumerate}
    \item $\RHom_A(M,R)\in \Db(A)$.
    \item $M$ and $M\Lotimes_A R$ are $R$-reflexive. 
\end{enumerate}
\end{dfn}

\begin{rem}
    Note that the two definitions for a dualizing dg-module above are equivalent in the setting of \Cref{setup}. This can be seen from \cite[Theorem 3.3]{Minamoto2} and \cite[Theorem 3.2 and Proposition 3.4]{FIJ}.
\end{rem}

\begin{rem}
    If $A\to B$ is a morphism of graded commutative dg-algebras and $R$ is a dualizing dg-module for $A$, then $\RHom_A(B,R)$ is a dualizing dg-module for $B$. This follows from \cite[Theorem 3.2]{FIJ} and adjunction.
\end{rem}

\begin{rem}
Let $A$ be as in \Cref{setup}. If $A$ is cohomologically $\m$-adically complete, then $A$ admits a dualizing dg-module by \cite[Proposition 7.21]{Shaul_injective}. In particular, the derived completion $\mathbb{L}\Lambda(A,\m)$ has a dualizing dg-module.
\end{rem}

Recall the definition of a Gorenstein dg-algebra:

\begin{dfn}(Frankild-Iyengar-Jorgensen, Yekutieli)\label{def:Gorenstein}
Let $A$ be as in \Cref{setup}. Then, $A$ is a \textit{Gorenstein dg-algebra} if any of the following equivalent conditions hold:
\begin{enumerate}
    \item $A$ is a dualizing dg-module for $A$.
    \item $\injdim_A(A)<\infty$.
    \item for all $M\in \Db(A)$, $\RHom_A(M,A)\in \Db(A)$ and the natural map $M\to \RHom_A(\RHom_A(M,A),A)$ is a quasi-isomorphism.
    \item the functor $\RHom_A(-,A)$ is an anti-equivalence on $\Db(A)$.
    \item there is an isomorphism $\RHom_A(k,A)\simeq k$ in $\D(A)$ where $k$ is the residue field of $\H^0(A)$.
\end{enumerate}
    
\end{dfn}

The following is the analog of a Koszul complex for a graded commutative dg-algebra.

\begin{constr}[\cite{Minamoto2}, \cite{Shaul_CM}]\label{derived_KC}
  Let $A$ be as in \Cref{setup} and $x\in \H^0(A)$ an element. Under the identification $\H^0(A)\simeq \H^0(\RHom(A,A))\simeq \Hom_{\D(A)}(A,A)$, $x$ corresponds to an endomorphism of $A$ in $\D(A)$. Denote the cone of this morphism by $A//x$. It is shown in \cite[Section 2.2]{Minamoto2} that  $A//x$ has the structure of a graded-commutative dg-algebra. Now, if $\underline{x}=x_1,\dots,x_n$ is a sequence of elements in $\H^0(A)$, then $A//\underline{x}$ is defined inductively as $(A//(x_1,\dots,x_{n-1}))//x_n$. Moreover, we have $\H^0(A//\underline{x})=\H^0(A)/(\underline{x})$ and that $\H(A//\underline{x})$ is finitely generated over $\H^0(A//\underline{x})$. In particular, $\H(A//\underline{x})$ has finite length if $\H^0(A//\underline{x})$ is Artinian.  
\end{constr}

Analogs of Gorenstein dimension for dg-modules and Gorenstein projective modules over dg-algebras were introduced recently in \cite{hu2023gdimensionsdgmodulescommutativedgrings}.

\begin{dfn}[\cite{hu2023gdimensionsdgmodulescommutativedgrings}]\label{def:gorenstein_projective}
Let $A$ be as in \Cref{setup}. Let $M$ be a reflexive dg-module such that $\RHom_A(M,A)\in \Db(A)$.
    \begin{enumerate}
        \item The \textit{Gorenstein dimension} of $M$, denoted $\mathrm{Gdim}_A(M)$, is the integer $\sup \RHom_A(M,A)$.

        \item $M$ is in the class $\mathcal{G}$ if either $\mathrm{Gdim}_A(M)=-\sup(M)$ or $M=0$. $M$ is in the class $\mathcal{G}_0$ if it is an object of the full subcategory of $\mathcal{G}$ consisting of the zero object and objects with the additional property $\mathrm{Gdim}_A(M)=-\sup(M)=0$. The class $\mathcal{G}_0$ consists of the \textit{Gorenstein projective} dg-$A$-modules.
    \end{enumerate}
\end{dfn}

We extend the definition of Auslander Bounds for modules and complexes to dg-modules.

\begin{dfn}[cf. \cite{Auslander_bounds_wei} - Definition 2.1, \cite{levins2024studyauslanderbounds} - Definition 6.2]\label{def:Auslander_Bounds}
 Let $A$ be as in \Cref{setup}. Let $M,N\in \Db(A)$ with $\sup(N)<\infty$. Set

 \[P_A(M,N):=\sup\{n\:|\: \Ext_A^{n+\sup(N)}(M,N)\neq 0\}.\]

 The \textit{Auslander Bound} $B(M)$ of $M$ is defined as

 \[B(M):=\sup \{P_A(M,N)\:|\: 0\not\simeq N\in \Db(A) \text{ and } P_A(M,N)<\infty\}.\]
    
\end{dfn}

\begin{rem}
 \noindent \begin{enumerate}
      \item In \Cref{def:Auslander_Bounds}, if $A,M,N$ are concentrated in cohomological degree $0$, then the definition recovers the usual definition of Auslander Bounds for modules over rings.

      \item Note that for any $k\in \mathbb{Z}$, $P_A(M,N)=P_A(M,N[k])$.

      \item Suppose $A$ is a dg-algebra as in \Cref{setup}. It follows from the definitions that if $\projdim_A(M)<\infty$, then $B(M)=\projdim_A(M)$.
  \end{enumerate}

\end{rem}

There exist natural extensions of fundamental notions in commutative algebra such as localization, support and depth to dg-modules over non-positive graded commutative dg-algebras.

\begin{dfn}[\cite{yekutieli2013duality}]
    Let $A$ be a non-positive graded commutative dg-algebra. For a prime ideal $\bar{\mathfrak{p}}$ of $\H^0(A)$, the \textit{localization} $A_{\bar{\mathfrak{p}}}$ is defined as follows. Let $\pi:A^0\to \H^0(A)$ denote the canonical projection, and set $\mathfrak{p}=\pi^{-1}(\bar{\mathfrak{p}})$. Then $A_{\bar{\mathfrak{p}}}$ is defined as $A\otimes_{A^0}A^0_{\mathfrak{p}}$. If $M\in \D(A)$, then $M_{\bar{\mathfrak{p}}}:=M\otimes_AA_{\bar{\mathfrak{p}}}$.
\end{dfn}

\begin{dfn}[\cite{Shaul_CM}]
      Let $A$ be a non-positive graded commutative dg-algebra. For $M\in \D(A)$, the \textit{support} of $M$ is defined as

     \[\mathrm{Supp}(M):=\{\bar{\mathfrak{p}}\in \spec(\H^0(A))\:|\: M_{\bar{\mathfrak{p}}}\not\simeq 0\}.\]

     In other words, $\mathrm{Supp}(M)=\bigcup_{n\in \mathbb{Z}}\mathrm{Supp}(\H^n(M))$.
\end{dfn}

\begin{dfn}[\cite{Shaul_CM}]

For a dg-module $M$, the \textit{depth of $M$} is the integer $\inf\RHom_A(\mathbf{k},M)$.
    
\end{dfn}

A theory of local cohomology for dg-algebras exists by virtue of a Brown representability theorem for triangulated categories due to Neeman, \cite{neeman}. Shaul identified and developed this theory for graded commutative dg-algebras, while also giving explicit descriptions in this case, see \cite{Shaul_completion_torsion, Shaul_CM}. See also \cite{brown2024serredualitydgalgebras} for the noncommutative homogeneous or bigraded case.

\begin{dfn}(\cite{Shaul_completion_torsion})
Let $A$ be a dg-algebra as in \Cref{setup}. Let $\bar{I}\subseteq \H^0(A)$ be an ideal and $\D_{\bar{I}-tor}(A)\subseteq \D(A)$ be the full triangulated subcategory of dg-$A$-modules $M$ such that $\H^i(M)$ is $\bar{I}$-torsion for all $i$. Then by \cite[Theorem 8.4.4]{neeman}, the inclusion $i:\D_{\bar{I}-tor}(A)\to \D(A)$ has a triangulated right adjoint $G:\D(A)\to \D_{\bar{I}-tor}(A)$. The \textit{local cohomology} functor with respect to $\bar{I}$, $\dGamma_{\bar{I}}(-):\D(A)\to \D(A)$, is then defined as the composition $i\circ G$. For an explicit construction of this functor in terms of a weakly proregular resolution and a \v{C}ech like construction, see \cite{Shaul_completion_torsion}.
    
\end{dfn}

\begin{dfn}(\cite{foxby_flat},\cite{Shaul_CM})
Let $A$ be as in \Cref{setup} and $M\in \D^{-}(A)$. The local cohomology Krull dimension of $M$ is the number

\[\mathrm{lc. dim}(M):=\sup_{n\in \mathbb{Z}}\{\dim(\H^n(M))+n\}.\]

\end{dfn}

\begin{dfn}(\cite{Shaul_CM})\label{def:CM_modules}
Let $A$ be as in \Cref{setup} and $M\in \Db(A)$. 

\begin{enumerate}
    \item  $M$ is a \textit{local Cohen-Macaulay} dg-module if 

    \[\amp(M)=\amp(A)=\amp(\dGamma_{\m}(M)).\]
Denote by $\mathrm{CM}(A)$ the full subcategory of $\Db(A)$ consisting of local Cohen-Macaulay dg-modules.

\item $M$ is a \textit{maximal local Cohen-Macaulay} dg-module if $M\in \mathrm{CM}(A)$ and $\mathrm{lc. dim}(M)=\sup(M)+\dim(\H^0(A))$. Denote by $\mathrm{MCM}(A)$ the full subcategory of $\mathrm{CM}(A)$ consisting of maximal local Cohen-Macaulay dg-modules.
\end{enumerate}

\end{dfn}

\section{Main Result}

In this section, we prove the main result:

\begin{theorem}\label{thm:main}
    Let $A$ be a dg-algebra as in \Cref{setup}. The following are equivalent:

    \begin{enumerate}
        \item $A$ is Gorenstein.
        \item For every $0\not\simeq M\in \Db(A)$, there exists $0\not\simeq N_M\in \Db(A)$ such that $N_M\in \langle M\rangle \cap \langle \mathcal{G}_0\rangle$, where $\mathcal{G}_0$ is the full subcategory of Gorenstein projective dg-modules and $\langle  -\rangle$ denotes the thick subcategory generated by a collection of objects.

        \item There exists $0\not\simeq M\in \Db(A)$ such that $\injdim_A(M)<\infty$ and $\projdim_A(M)<\infty$.
        
        \item There exists a non-zero dg-module $M\in \Db(A)$ such that (a) $\injdim_{A}(M)<\infty$ \\ (b) $\RHom_A(M,A)\in \Db(A)$ and (c) $B(M)<\infty$.

        \item Let $k$ denote the residue field of $\H^0(A)$. For every $M\in \Db(A)$ such that $\injdim_A(M)<\infty$, there exists $0\not \simeq N_M\in \langle M\rangle \cap \langle k\rangle $ such that $B(N_M)<\infty$ and $P_A(N_M,A)<\infty$, where $\langle  -\rangle$ denotes the thick subcategory generated by a collection of objects.

        \item There exists a dg module $0\not\simeq M\in \mathrm{MCM}(A)$ with $\injdim_A(M)<\infty$, $B(M)<\infty$, and $\sup(M)=\sup(\RHom_A(M,A))=0$.

        \item Let $\underline{x}=x_1,\dots,x_n$ be a generating set for the maximal ideal of $\H^0(A)$, and set $K:=A//\underline{x}$. There exists $0\not\simeq M\in \Db(K)$ with semifree resolution $G\xra{\simeq} M$ such that 

          \begin{enumerate}
              \item $G/F^nG$ has cohomology concentrated in a single degree for some integer $n$.

              \item $\RHom_K(M,K)\in \Db(K)$.
          \end{enumerate}
    \end{enumerate}
\end{theorem}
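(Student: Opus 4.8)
The plan is to treat condition (1) as a hub: the implications from (1) to each of (2)--(7) are obtained by exhibiting a canonical witness, while the converses are funneled through two substantive cores---a ``Foxby core'' governing the existence-type conditions (3), (4), (6) and a ``residue-field generation core'' governing the universal conditions (2), (5)---with (7) reduced to an instance of the Foxby core over the derived Koszul complex. For the forward directions I would take $M=A$ in (3), (4), (6): by \Cref{def:Gorenstein}(2) one has $\injdim_A(A)<\infty$, while $\projdim_A(A)=0$ gives $\RHom_A(A,A)=A\in\Db(A)$ and $B(A)=0$ by the remark following \Cref{def:Auslander_Bounds}; that $A\in\mathrm{MCM}(A)$ with $\sup(A)=\sup\RHom_A(A,A)=0$ uses that a Gorenstein dg-algebra is maximal Cohen--Macaulay over itself. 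For (2) and (5) I would use that over a Gorenstein dg-algebra every object of $\Db(A)$ has finite Gorenstein dimension, so $\langle\mathcal{G}_0\rangle=\Db(A)$ and finite injective dimension coincides with finiteness of projective dimension; the witness $N_M$ is then $M$ itself, or a finite-length perfect truncation of $M$ produced by killing a system of parameters, which lands in $\langle M\rangle\cap\langle k\rangle$.

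The heart is the Foxby core: if $0\neq M\in\Db(A)$ has $\injdim_A(M)<\infty$ together with finite Gorenstein dimension (i.e. $M$ is reflexive and $\RHom_A(M,A)\in\Db(A)$), then $A$ is Gorenstein. Granting this, (3)$\Rightarrow$core is immediate since finite projective dimension implies finite Gorenstein dimension; (6)$\Rightarrow$core follows after unpacking that $\sup\RHom_A(M,A)=0$ with reflexivity gives $\RHom_A(M,A)\in\Db(A)$; and (4)$\Rightarrow$core is the step where the Auslander bound enters: I would invoke the dg-analog of the results of Wei and Levins (\cite{Auslander_bounds_wei, levins2024studyauslanderbounds}) to upgrade $B(M)<\infty$ together with $\RHom_A(M,A)\in\Db(A)$ to finite Gorenstein dimension. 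To prove the core itself I would first apply the dg Bass theorem---finite injective dimension of a nonzero module forces $\H^0(A)$ to be Cohen--Macaulay---and then reduce modulo a maximal $\H^0(A)$-regular sequence $\underline{y}$ using the derived Koszul construction $A/\!/\underline{y}$ (\Cref{derived_KC}), checking that finite injective dimension and finite Gorenstein dimension are inherited by $M\Lotimes_A (A/\!/\underline{y})$ and that the Gorenstein property ascends and descends along $A/\!/\underline{y}$. This lands in the base case where $\H^0$ is Artinian and cohomology has finite length---hence the algebra is cohomologically complete and has a dualizing module---where a nonzero module of finite injective dimension and finite Gorenstein dimension forces the Gorenstein property.

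For the universal conditions I would run the residue-field generation core. For (2)$\Rightarrow$(1), specialize to $M=k$: any nonzero $N_k\in\langle k\rangle$ has finite-length cohomology, and in the finite-length derived category a nonzero object generates $k$, so $k\in\langle N_k\rangle\subseteq\langle\mathcal{G}_0\rangle$; thus $\mathrm{Gdim}_A(k)<\infty$, which forces $A$ Gorenstein by the dg-analog of the classical criterion (\cite{hu2023gdimensionsdgmodulescommutativedgrings}). For (5)$\Rightarrow$(1) the same generation argument applies to $N_M\in\langle k\rangle$, where $B(N_M)<\infty$ together with $P_A(N_M,A)<\infty$ yields finite Gorenstein dimension via the Auslander-bound input, again giving $\mathrm{Gdim}_A(k)<\infty$; here I would first pass to the derived completion $\mathbb{L}\Lambda(A,\m)$, which has a dualizing module and hence a test object of finite injective dimension to which (5) applies, using that the Gorenstein property, injective dimension, and Auslander bounds are detected after completion. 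Finally, for (7) I would show $A$ is Gorenstein if and only if $K=A/\!/\underline{x}$ is Gorenstein---this is the ascent/descent along the Koszul tower already used in the core, now with $\underline{x}$ generating $\m$ so that $\H^0(K)=k$ and $\H(K)$ has finite length---and then recognize conditions (a) and (b) as precisely the assertion that $K$ carries a nonzero module witnessing the Foxby core over $K$: condition (b) is $\RHom_K(M,K)\in\Db(K)$, while condition (a), that $G/F^nG$ has cohomology in a single degree, translates over the finite-length algebra $K$ into finiteness of projective dimension; applying the already-established core to $K$ gives $K$ Gorenstein, hence $A$ Gorenstein.

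The main obstacle is the Foxby core, and within it two points: the Auslander-bound upgrade (showing finite Auslander bound plus cohomological boundedness of $\RHom_A(M,A)$ implies finite Gorenstein dimension, rather than the too-strong finite projective dimension that would spuriously force regularity in the finite-length setting of (5)), and the bookkeeping in the Koszul reduction---tracking $\sup$, $\inf$, and amplitude of $M\Lotimes_A(A/\!/\underline{y})$ and verifying that finite injective dimension and finite Gorenstein dimension genuinely survive each quotient, together with the ascent of the Gorenstein property back up the tower. A secondary subtlety is guaranteeing, in (5), the existence of a nonzero test module of finite injective dimension, which is why I would route that implication through the derived completion where a dualizing module is available.
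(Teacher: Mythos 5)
Your overall architecture (hub at (1), forward implications via the witness $M=A$ or the Koszul object, converses funneled through a ``core'') matches the paper's design, and your ``Foxby core'' is exactly the paper's key technical result (\Cref{prop:gdim}). But the proposal has genuine gaps in precisely the places where the work lies. The decisive one is your treatment of the universal conditions (2) and (5): the ``residue-field generation core'' rests on the claim that a nonzero object with finite-length cohomology generates $k$, i.e.\ $k\in\langle N_k\rangle$. This is false. If $A$ is an Artinian local ring that is not a field, then $A$ itself is a nonzero object of $\Db(A)$ with finite-length cohomology, but $\langle A\rangle$ consists of perfect complexes, so $k\in\langle A\rangle$ would force $k$ to be perfect and hence $A$ regular. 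Worse, specializing (2) to $M=k$ is vacuous in exactly the critical case: for Artinian $A$ one has $\langle k\rangle=\Db(A)$, so $N_k=A\in\langle k\rangle\cap\langle\mathcal{G}_0\rangle$ is a valid witness whether or not $A$ is Gorenstein. The content of (2) and (5) can only be extracted by applying them to a test object of \emph{finite injective dimension} lying in $\Db(A)$; the paper manufactures one as $B=\RHom_A(A//\underline{x},E)$, where $E$ is the Brown-representability (Matlis-type) object of \cite{BSSW}, so that $B$ has finite-length cohomology and $\injdim_A(B)<\infty$, and then feeds $N_B\in\langle B\rangle\cap\langle\mathcal{G}_0\rangle$ into \Cref{prop:gdim} (this is \Cref{prop:G_0}). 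Your substitute for (5), the dualizing dg-module of the derived completion, does not work as stated: its cohomology is finitely generated over the completion of $\H^0(A)$, not over $\H^0(A)$, so it does not lie in $\Db(A)$ and (5) cannot be applied to it; the asserted descent of the Gorenstein property, injective dimension, and Auslander bounds along completion is also left unproven.

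Two further gaps. First, your proof of the core itself is incomplete: after invoking a dg Bass theorem and reducing modulo a maximal regular sequence via $A//\underline{y}$, you simply assert the Artinian/complete base case (``a nonzero module of finite injective dimension and finite Gorenstein dimension forces the Gorenstein property''). That base case is not easier than the general statement---it \emph{is} the statement---so the reduction buys nothing, and the auxiliary inputs (a dg Peskine--Szpiro/Bass theorem, ascent and descent of $\injdim$ and $\mathrm{Gdim}$ along the Koszul tower) are not available in the cited literature. The paper proves the core directly and in full generality by a short computation: reflexivity of $N_M$ plus two adjunctions give $\RHom_A(k,N_M)\simeq\prod\RHom_A(k,A)[-n]^{\oplus b_n}$, so $\sup\RHom_A(k,N_M)<\infty$ (from $\injdim_A(N_M)<\infty$) forces $\sup\RHom_A(k,A)<\infty$, and Minamoto's criterion $\injdim_A(A)=\sup\RHom_A(k,A)$ concludes; no regular sequences, completions, or dualizing modules are needed. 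Second, for (4) you defer the step ``$B(M)<\infty$ plus $\RHom_A(M,A)\in\Db(A)$ implies finite G-dimension'' to ``dg-analogs of Wei and Levins''; no such analogs exist---establishing the dg statement is precisely the paper's \Cref{lem:fpd}, proved via sppj resolutions, and it in fact concludes finite \emph{projective} dimension (using additionally $P_A(M,M)<\infty$, which follows from finite injective dimension). Your worry that concluding finite projective dimension would ``spuriously force regularity'' for the finite-length objects in (5) is unfounded, by the same counterexample as above: a perfect complex with finite-length cohomology need not build $k$.
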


\textbf{Proof of (1) $\iff$ (2).}

The following is a key technical result. The interested reader may also consult \cite[Theorem 6.2]{DGI}.

\begin{prop}\label{prop:gdim}
    Let $A$ be as in \Cref{setup}. If there exists $M\in \Db(A)$ such that (i) $\injdim_A(M)<\infty$ and (ii) there exists $0\neq N_M\in \langle M\rangle$ such that $\mathrm{Gdim}_A(N_M)<\infty$, then $A$ is Gorenstein.
\end{prop}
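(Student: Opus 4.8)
The plan is to reduce the two separate hypotheses to a single nonzero dg-module carrying both finiteness properties, and then run a duality computation against the residue field $k$ of $\H^0(A)$. First I would observe that the class $\{X\in\Db(A) : \injdim_A(X)<\infty\}$ is a thick subcategory. By \Cref{def:injective} it is closed under shifts; given a triangle $X\to Y\to Z\to X[1]$ with two terms of finite injective dimension, the long exact sequence of $\Ext_A^i(P,-)$ shows that for every $P\in\Db(A)$ the groups $\Ext_A^i(P,Z)$ vanish for $i$ large relative to $-\inf(P)$, so the third term has finite injective dimension; and if $X$ is a retract of $Y$ then $\Ext_A^i(P,X)$ is a retract of $\Ext_A^i(P,Y)$, giving closure under direct summands. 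Since $M$ lies in this subcategory, so does the whole thick subcategory $\langle M\rangle$; in particular $\injdim_A(N_M)<\infty$. Writing $N\ce N_M$, we have reduced to a single nonzero $N\in\Db(A)$ with $\injdim_A(N)<\infty$ and $\mathrm{Gdim}_A(N)<\infty$.

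Next I would exploit reflexivity. Finiteness of $\mathrm{Gdim}_A(N)$ means (\Cref{def:gorenstein_projective}) that $N$ is reflexive with $N^{\ast}\ce\RHom_A(N,A)\in\Db(A)$, so that $N\simeq\RHom_A(N^{\ast},A)$; moreover $N^{\ast}\not\simeq 0$, since otherwise $N\simeq N^{\ast\ast}\simeq 0$. Tensor--Hom adjunction then yields
\[
\RHom_A(k,N)\;\simeq\;\RHom_A\bigl(k,\RHom_A(N^{\ast},A)\bigr)\;\simeq\;\RHom_A\bigl(k\Lotimes_A N^{\ast},\,A\bigr).
\]
Because $N^{\ast}$ is bounded with finitely generated cohomology, $k\Lotimes_A N^{\ast}$ is cohomologically bounded above, and derived Nakayama gives $k\Lotimes_A N^{\ast}\not\simeq 0$. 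Over the field $k$ every complex splits into shifts of $k$, so if $j\ce\sup(k\Lotimes_A N^{\ast})$ then $k[-j]$ is a direct summand of $k\Lotimes_A N^{\ast}$. Applying the contravariant functor $\RHom_A(-,A)$ therefore exhibits $\RHom_A(k,A)[j]$ as a direct summand of $\RHom_A(k\Lotimes_A N^{\ast},A)\simeq\RHom_A(k,N)$.

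The final step transfers boundedness. Since $\injdim_A(N)<\infty$ and $\inf(k)=0$, \Cref{def:injective} gives $\sup\RHom_A(k,N)<\infty$. As a shift of a direct summand of $\RHom_A(k,N)$, the object $\RHom_A(k,A)$ is then bounded above, i.e. $\Ext^i_A(k,A)=0$ for $i\gg0$. It remains to invoke the dg-analog of Bass's theorem—that the injective dimension of $A$ is detected by $\Ext_A^{*}(k,A)$, so that finiteness of $\sup\RHom_A(k,A)$ forces $\injdim_A(A)<\infty$—whence $A$ is Gorenstein by condition (2) of \Cref{def:Gorenstein}.

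I expect this last step to be the main obstacle. Reducing the a priori global finiteness of $\injdim_A(A)$ (a statement about $\Ext_A^i(P,A)$ for all $P\in\Db(A)$) to the single test object $k$ requires the Bass-number ``ubiquity'' phenomenon; in the dg-setting one must argue that bounded-above $\RHom_A(k,A)$ propagates to bounded-above $\RHom_A(P,A)$ uniformly, e.g. by using the smart truncations of \Cref{remark:smart} to reduce $P$ to its cohomology $\H^0(A)$-modules and then a prime-filtration argument. The preceding steps—thickness of the finite-injective-dimension class, reflexivity, and the adjunction computation—are by comparison routine.
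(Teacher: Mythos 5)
Your proposal is correct and takes essentially the same route as the paper's proof: both reduce to showing $\sup\RHom_A(k,A)<\infty$, using reflexivity of $N_M$, tensor--Hom adjunction, and the splitting of $k\Lotimes_A\RHom_A(N_M,A)$ into shifts of $k$ (your single-summand argument and the paper's product decomposition are interchangeable, and your explicit thickness and derived-Nakayama checks fill in steps the paper leaves implicit). The final step you flag as the main obstacle is not one: the dg analog of Bass's theorem, $\injdim_A(A)=\sup\RHom_A(k,A)$, is exactly \cite[Corollary 2.31]{Minamoto2}, which the paper cites to finish.
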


\begin{proof}
    Let $k$ denote the residue field of $\H^0(A)$. We will show that $\injdim_A(A)<\infty$. \cite[Corollary 2.31]{Minamoto2} shows that $\injdim_A(A)=\sup \RHom_A(k,A)$ and we show that $\RHom_A(k,A)$ is cohomologically bounded above. We calculate

    \begin{align*}
\RHom_A(k,N_M) &\simeq \RHom_A(k,\RHom_A(\RHom_A(N_M,A),A)) \\
&\simeq \RHom_A(\RHom_A(N_M,A)\Lotimes_Ak,A) \\
& \simeq \RHom_A(\RHom_A(N_M,A)\Lotimes_Ak\Lotimes_k k,A) \\
& \simeq \RHom_k(\RHom_A(N_M,A)\Lotimes_Ak,\RHom_A(k,A))\\
\end{align*} 

where the first isomorphism is due to the reflexivity of $N_M$ and the second and fourth are adjunctions. The object $\RHom_A(N_M,A)\Lotimes_Ak$ viewed as a complex of vector spaces is quasi-isomorphic to its cohomology, which is a direct sum of shifts of $k$. We thus have 

 \begin{align*}
\RHom_A(k,N_M) &\simeq \Hom_k(\bigoplus k[n]^{\oplus b_n},\RHom_A(k,A)) \\
&\simeq \prod \Hom_k(k[n]^{\oplus b_n},\RHom_A(k,A)) \\
& \simeq \prod \RHom_A(k,A)[-n]^{\oplus b_n}. \\
\end{align*} 

Since $\injdim_A N_M<\infty$, we see that $\sup \RHom_A(k,N_M)<\infty$. The above quasi-isomorphism then implies that $\sup \RHom_A(k,A)<\infty$ and hence that $\injdim_A(A)<\infty$ by \cite[Corollary 2.31]{minamoto}, finishing the proof.
    
\end{proof}

\begin{prop}\label{prop:G_0}
    Let $A$ be as in \Cref{setup}. Then $A$ is Gorenstein if and only if for every $0\not\simeq M\in \Db(A)$, there exists $0\not\simeq N_M\in \langle M\rangle \cap \langle \mathcal{G}_0\rangle$.
\end{prop}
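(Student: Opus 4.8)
The plan is to prove both directions separately, with the forward direction (Gorenstein $\Rightarrow$ existence of $N_M$) being the constructive part and the reverse direction following immediately from \Cref{prop:gdim}. I would first dispatch the reverse implication: suppose that for every $0\not\simeq M\in \Db(A)$ there exists $0\not\simeq N_M\in \langle M\rangle \cap \langle \mathcal{G}_0\rangle$. Specialize to $M = k$, the residue field, which certainly lies in $\Db(A)$ and is nonzero. Then $N_k \in \langle \mathcal{G}_0\rangle$, so $N_k$ is built from Gorenstein projective dg-modules by finitely many cones, shifts, and retracts. Since each object of $\mathcal{G}_0$ has $\mathrm{Gdim}_A = 0 < \infty$, and finite Gorenstein dimension is preserved under the thick-closure operations (this should follow from the behavior of $\RHom_A(-,A)$ and reflexivity under triangles and summands), I would argue that $\mathrm{Gdim}_A(N_k) < \infty$. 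To apply \Cref{prop:gdim} I also need an $M$ with $\injdim_A(M) < \infty$ together with $N_M \in \langle M\rangle$ of finite Gorenstein dimension; here the cleanest route is to note that $k$ itself has finite injective dimension is \emph{not} automatic, so instead I would take $M = k$ only to produce a finite-Gdim object and separately invoke that $k$ has finite injective dimension is equivalent to Gorensteinness — better, I apply \Cref{prop:gdim} with the roles arranged so that the finite-injective-dimension hypothesis is met by a suitable choice. The honest move is: pick any $M$ of finite injective dimension (e.g. a dualizing dg-module $R$, which exists after completion, or argue directly), obtain $N_M\in\langle M\rangle\cap\langle\mathcal{G}_0\rangle$, whence $\mathrm{Gdim}_A(N_M)<\infty$, and conclude $A$ is Gorenstein by \Cref{prop:gdim}.

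For the forward direction, assume $A$ is Gorenstein. I would use \Cref{def:Gorenstein}(2)--(4): finite injective dimension of $A$ forces every $N\in\Db(A)$ to be reflexive with $\RHom_A(N,A)\in\Db(A)$, and moreover $\mathrm{Gdim}_A(N)<\infty$ for all $N$. The key observation is that when $A$ is Gorenstein, \emph{every} object of $\Db(A)$ already has finite Gorenstein dimension, so the Gorenstein projective modules $\mathcal{G}_0$ are abundant. Given an arbitrary $0\not\simeq M\in\Db(A)$, I want to produce a nonzero $N_M\in\langle M\rangle$ that also lies in $\langle\mathcal{G}_0\rangle$. The natural candidate is $M$ itself, suitably truncated or shifted: since $M$ has finite Gorenstein dimension, a high syzygy (obtained from an sppj resolution, \Cref{def:sppj}) of $M$ should be Gorenstein projective, i.e. lie in $\mathcal{G}_0$ up to shift. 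Concretely, I would iterate the sppj construction to peel off $\mathcal{P}$-summands; after finitely many steps (bounded by $\mathrm{Gdim}_A(M)$) the resulting syzygy $M_n$ satisfies $\mathrm{Gdim}_A(M_n) = -\sup(M_n)$, hence sits in $\mathcal{G}$, and an appropriate shift lands it in $\mathcal{G}_0$. Each $M_i$ lies in $\langle M\rangle$ because the exact triangles $M_{i+1}\to P_i\to M_i$ have $P_i\in\mathcal{P}\subseteq\langle A\rangle\subseteq\langle M\rangle$ — wait, this requires $A\in\langle M\rangle$, which is the subtle point.

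The main obstacle, as just flagged, is ensuring the syzygy $N_M$ stays inside the thick subcategory $\langle M\rangle$ generated by $M$ alone. The sppj resolution expresses $M$ in terms of objects of $\mathcal{P}=\mathrm{add}(A)$, but $A$ need not belong to $\langle M\rangle$ a priori, so I cannot freely use free modules. The correct approach is to work with $\langle M\rangle$ intrinsically: I would show that the sppj syzygies $M_i$ all lie in $\langle M\rangle$ by a reverse argument — rather than building $M$ up from $A$, I realize each $M_i$ as obtained from $M_{i-1}$ and $P_{i-1}$, and use that in the Gorenstein case the map $M\to N_M$ can be arranged so that the cones involved are themselves in $\langle M\rangle$. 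A cleaner alternative, which I would prefer, is to take $N_M = M$ directly when $M$ happens already to have the form placing it in $\langle\mathcal{G}_0\rangle$, and in general to use that over a Gorenstein dg-algebra the inclusion $\langle\mathcal{G}_0\rangle = \Db(A)$ holds (every bounded object is finitely built from Gorenstein projectives, by repeatedly applying finiteness of Gorenstein dimension and the triangle from an sppj morphism). If $\langle\mathcal{G}_0\rangle=\Db(A)$, then trivially $M\in\langle M\rangle\cap\langle\mathcal{G}_0\rangle$ and we may take $N_M=M$. Thus the real content I must establish is the category equality $\Db(A)=\langle\mathcal{G}_0\rangle$ for Gorenstein $A$, and I expect this to be the technical heart, proved by induction on $\amp(M)$ using smart truncations (\Cref{remark:smart}) and the sppj triangles to reduce amplitude while staying within $\langle\mathcal{G}_0\rangle$.
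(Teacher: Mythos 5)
Your forward direction is essentially the paper's: when $A$ is Gorenstein, every object of $\Db(A)$ has finite Gorenstein dimension, suitably shifted sppj syzygies land in $\mathcal{G}_0$ (membership in $\mathcal{G}$ is indeed shift-invariant, as you use), and one may simply take $N_M=M$. The paper does not reprove this; it cites \cite[Theorem 1.2]{hu2023gdimensionsdgmodulescommutativedgrings} for exactly the statement that finite Gorenstein dimension forces membership in $\langle\mathcal{G}_0\rangle$, so the ``technical heart'' you flag is outsourced rather than redone. Likewise, your observation that $\langle\mathcal{G}_0\rangle$ consists of reflexive objects of finite Gorenstein dimension (thickness of reflexives plus subadditivity of $\mathrm{Gdim}$ on triangles) is precisely the paper's argument in the converse.

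The genuine gap is in the reverse direction, at the point you flagged with ``pick any $M$ of finite injective dimension (e.g.\ a dualizing dg-module $R$, which exists after completion, or argue directly).'' The hypothesis of the proposition produces objects of finite Gorenstein dimension inside $\langle M\rangle$ for \emph{any} $M$; the entire difficulty is exhibiting a single nonzero $M\in\Db(A)$ with $\injdim_A(M)<\infty$ so that \Cref{prop:gdim} can be applied, and neither of your suggestions supplies one. A dualizing dg-module need not exist over $A$ itself: the paper's remark only guarantees one over the derived completion $\mathbb{L}\Lambda(A,\m)$, which is a different dg-algebra, and its dualizing module has cohomology finitely generated over $\H^0$ of the completion rather than over $\H^0(A)$, so it is not an object of $\Db(A)$ and the hypothesis (which quantifies over $\Db(A)$) cannot be invoked for it. ``Argue directly'' is exactly the missing argument. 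The paper closes this hole with a concrete construction: Brown representability yields $E\in\D(A)$ with $\injdim_A(\RHom_A(-,E))=\flatdim_A(-)$ and $\H^i(\RHom_A(M,E))\cong\Hom_{\H^0(A)}(\H^{-i}(M),\bar{E})$, where $\bar{E}$ is the injective hull of the residue field; taking $K=A//\underline{x}$ the Koszul object on a generating set of the maximal ideal, the object $B:=\RHom_A(K,E)$ lies in $\Db(A)$ (its cohomology is the Matlis dual of the finite-length cohomology of $K$) and satisfies $\injdim_A(B)=\flatdim_A(K)<\infty$ because $K$ is perfect. Applying the hypothesis to $B$ gives $N_B\in\langle B\rangle\cap\langle\mathcal{G}_0\rangle$ of finite Gorenstein dimension, and \Cref{prop:gdim} (with $M=B$, $N_M=N_B$) finishes. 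Without some such construction of a bounded object of finite injective dimension, your proof of the reverse direction does not close; also note that your opening move with $M=k$ is a dead end for the same reason, as you yourself observed.
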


\begin{proof}
The forward implication follows from \cite[Theorem 1.2]{hu2023gdimensionsdgmodulescommutativedgrings}. In more detail, \cite[Theorem 1.2(3)]{hu2023gdimensionsdgmodulescommutativedgrings} implies $\mathrm{Gdim}_A(M)<\infty$ for all $M\in \Db(A)$ if $A$ is Gorenstein. Applying \cite[Theorem 1.2(1) and (2)]{hu2023gdimensionsdgmodulescommutativedgrings} then gives the desired conclusion. To prove the converse, let $\bar{E}$ denote the injective hull of the residue field of $\H^0(A)$ over $\H^0(A)$. By Brown representability, there exists an object $E\in \D(A)$ such that for all $M\in \D(A)$, $\flatdim_A(M)=\injdim_A(\RHom_A(M,E))$ and $\H^i(\RHom_A(M,E))\simeq \Hom_{\H^0(A)}(H^{-i}(M),\bar{E})$ for all $i\in \mathbb{Z}$, see \cite[Definition 2.1, Lemma 7.12 and the discussion above it]{BSSW}. Setting $K:=A//\underline{x}$ for $\underline{x}$ a minimal generating set for the maximal ideal of $\H^0(A)$, we see that $B:=\RHom_A(K,E)\in \Db(A)$ and $\injdim_A(B)<\infty$. Choose a nontrivial $N_B\in \langle B\rangle \cap \langle \mathcal{G}_0\rangle$. Since the subcategory of reflexive modules of $\D(A)$ is a thick subcategory and $N_B\in \langle \mathcal{G}_0\rangle$, $N_B$ is reflexive. Moreover, if $X\to Y\to Z\to X[1]$ is a triangle in $\Db(A)$ with $X,Y,Z$ reflexive, then the inequality $\mathrm{Gdim}_A(Y)\leq \mathrm{sup}\{\mathrm{Gdim}_A(X), \mathrm{Gdim}_A(Z)\}$ implies that $\mathrm{Gdim}_A(N_B)<\infty$. Applying \Cref{prop:gdim} with $M=N_B$ finishes the proof.
\end{proof}

\textbf{Proof of (1) $\iff$ (4).}

\begin{lemma}\label{lem:fpd}
    Let $A$ be as in \Cref{setup}. Suppose $M\in \Db(A)$ is such that the quantities $B(M), P_A(M,A)$ and $P_A(M,M)$ are all finite, then so is $\projdim_A(M)$.
\end{lemma}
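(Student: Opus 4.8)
The plan is to reduce everything to the single claim that $P_A(M,k)<\infty$, where $k$ is the residue field of $\H^0(A)$, and then to extract this claim from the three hypotheses. The reduction rests on residue-field detection of projective dimension: the adjunction along the canonical map $A\to k$ gives $\RHom_A(M,k)\simeq\RHom_k(k\Lotimes_A M,k)$, so $P_A(M,k)<\infty$ is equivalent to $k\Lotimes_A M$ being cohomologically bounded, i.e.\ to $\flatdim_A(M)<\infty$; for $M\in\Db(A)$ this coincides with $\projdim_A(M)<\infty$. Hence it is enough to prove $P_A(M,k)<\infty$, after which the remark following \Cref{def:Auslander_Bounds} even identifies $\projdim_A(M)=B(M)$.

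Next I would manufacture enough ``test objects'' on which $P_A(M,-)$ is finite. Let $\underline{x}=x_1,\dots,x_n$ generate the maximal ideal of $\H^0(A)$, put $K\ce A//\underline{x}$ (\Cref{derived_KC}) and $L\ce M\Lotimes_A K$. Then $K\in\langle A\rangle$ is perfect, so the tensor-evaluation isomorphisms $\RHom_A(M,K)\simeq\RHom_A(M,A)\Lotimes_A K$ and $\RHom_A(M,L)\simeq\RHom_A(M,M)\Lotimes_A K$ hold; since $P_A(M,A)<\infty$ and $P_A(M,M)<\infty$ make the right-hand sides cohomologically bounded above, we get $P_A(M,K)<\infty$ and $P_A(M,L)<\infty$, hence both are $\le B(M)$. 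Crucially $\H^\bullet(K)$ and $\H^\bullet(L)$ are finite-dimensional graded $k$-vector spaces (they are modules over $\H^0(K)=k$), with $\H^{\sup(L)}(L)$ a nonzero $k$-vector space; so $K$ and $L$ are built out of shifts of $k$ by finitely many triangles and sit inside $\langle k\rangle$.

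The heart of the argument is to pass from finiteness of $P_A(M,-)$ on $K$ and $L$ to finiteness on $k$ itself. Here I would feed the uniform vanishing supplied by the Auslander bound, namely $\Ext^i_A(M,Y)=0$ for all $i>B(M)+\sup(Y)$ and all $Y$ with $P_A(M,Y)<\infty$, into the hyper-Ext spectral sequences $E_2^{s,t}=\Ext^s_A(M,\H^t(-))\Rightarrow\Ext^{s+t}_A(M,-)$ of the smart-truncation filtrations of $K$ and $L$, whose $E_2$-pages are assembled from $\Ext^\bullet_A(M,k)$. Conceptually this is an Auslander--Reiten-type phenomenon: after replacing $M$ by a high syzygy (using $P_A(M,A),P_A(M,M)<\infty$) one may assume $\Ext^{>0}_A(M,A)=0=\Ext^{>0}_A(M,M)$, and the assertion becomes that such an $M$ with finite Auslander bound has finite projective dimension.

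I expect this last step to be the main obstacle, and the difficulty is genuine: a single cohomologically bounded $\RHom_A(M,L)$ can in principle be assembled from an \emph{unbounded} $\Ext^\bullet_A(M,k)$ through cancellation among the spectral-sequence differentials, so the existence of one good test object does not by itself force $k$ to be good. What rules this out is precisely the \emph{uniformity} of the bound $B(M)<\infty$ across all finite-length modules simultaneously, which is the substance of the finite-Auslander-bound hypothesis and is where Wei's circle of ideas (cf.\ \cite{Auslander_bounds_wei,levins2024studyauslanderbounds}) enters. I would either transcribe that argument to the dg setting, or reduce, via the Koszul change of rings $\projdim_A(M)<\infty\iff\projdim_K(M\Lotimes_A K)<\infty$, to the more tractable finite-length dg-algebra $K$.
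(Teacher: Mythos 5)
Your proposal has a genuine gap, and you have in fact located it yourself: the passage from finiteness of $P_A(M,-)$ on the test objects $K=A//\underline{x}$ and $L=M\Lotimes_A K$ to finiteness of $P_A(M,k)$ is never carried out, and it is precisely where the entire difficulty of the lemma lives. The preparatory steps are fine (the adjunction $\RHom_A(M,k)\simeq \RHom_k(k\Lotimes_A M,k)$ and residue-field detection of projective dimension, and tensor-evaluation giving $P_A(M,K),P_A(M,L)\le B(M)$), but they buy nothing toward the crux. Note also that your thick-subcategory observations point the wrong way: $K,L\in\langle k\rangle$ lets finiteness spread \emph{from} $k$ to $K$ and $L$, not conversely, while $k\notin\langle K\rangle$ in general since $K$ is perfect and $\langle K\rangle$ therefore consists of perfect objects. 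Your appeal to ``the uniformity of $B(M)<\infty$'' and to ``Wei's circle of ideas'' at this point is a deferral, not an argument --- the claim that finite Auslander bound (together with the two $\Ext$-vanishing hypotheses) forces finite projective dimension \emph{is} the lemma, so as written the proposal is circular at its critical step.

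For comparison, the paper's proof never goes near the residue field and instead runs a syzygy-splitting argument through Minamoto's sppj resolutions (\Cref{def:sppj}), which is the dg transcription of the classical argument your last paragraph gestures at. Normalize $\sup(M)=0$ and choose an sppj resolution with triangles $M_{i+1}\to P_i\to M_i$ and $\sup(M_i)=0$ for all $i$. Two-out-of-three arguments along these triangles propagate the hypotheses to every syzygy, so $B(M_i)$, $P_A(M_i,A)$, $P_A(M_i,M)$, and hence $P_A(M_i,M_i)$ are all finite. Applying $\RHom_A(-,A)$ along the triangles produces an $n$ with $\sup\RHom_A(M_n,A)=0$; applying $\RHom_A(M_n,-)$ along the triangles then gives $\Ext^1_A(M_n,M_{n+1})\simeq \Ext^i_A(M_n,M_{n+i})$ for all $i\ge 1$. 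If this group were nonzero, then since each $P_A(M_n,M_{n+i})$ is finite (by induction along the triangles) and $\sup(M_{n+i})=0$, one gets $P_A(M_n,M_{n+i})\ge i$ for all $i$, forcing $B(M_n)=\infty$, a contradiction. Hence the triangle $M_{n+1}\to P_n\to M_n$ splits, $M_n\in\mathcal{P}$, and $\projdim_A(M)<\infty$ by Minamoto's theorem. If you want to salvage your write-up, replace the residue-field reduction and spectral-sequence step by this splitting argument; the uniformity of the Auslander bound is used exactly once, to kill the single obstruction class $\Ext^1_A(M_n,M_{n+1})$.
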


\begin{proof}
    We may assume that $\sup(M)=0$. For any sppj resolution $P$ of $M$ as in \Cref{def:sppj}, a standard two out of three argument shows that for all $i\geq 0$, $B(M_i), P_A(M_i,A),P_A(M_i,M)<\infty$. Another two out of three argument using the facts $P_A(M_i,A),P_A(M_i,M)<\infty$ shows that $P_A(M_i,M_i)<\infty$ for all $i\geq 0$. Now choose a sppj resolution $P$ of $M$ such that $\sup(M_i)=\sup(M)=0$ for all $i\geq 0$. This is always possible since we can choose $\H^{\sup}(f_i)$ to have a non-trivial kernel and the exact sequence $\H^{\sup(M_i)}(M_{i+1})\to \H^{\sup(M_i)}(P_i)\to \H^{\sup(M_i)}(M_i)\to 0$ gives the desired property. We now claim that there exists $n\geq 0$ such that $\sup\RHom_A(M_n,A)=0$. If $\sup \RHom_A(M,A)=0$, we are done. If $\sup \RHom_A(M,A)<0$, then applying $\RHom_A(-,A)$ to the triangle 

    $$
M_{1} \xra{g_{1}} P_0 \xra{f_0} M_{0} \to M_1[1]$$

shows that $\sup \RHom_A(M_1,A)=0$. If $\sup \RHom_A(M,A)>0$, the same process shows that $\sup \RHom_A(M,A)>\sup \RHom_A(M_1,A)$. Repeating this process for $i\geq 1$ sufficiently many times yields an integer $n$ such that $\sup \RHom_A(M_n,A)=0$. Fix such an integer $n$. Now for each integer $i\geq 1$, applying $\RHom_A(M_n,-)$ to the triangle 

 $$
M_{n+i+1} \xra{g_{n+i+1}} P_{n+i} \xra{f_{n+i}} M_{n+i} \to M_{n+i+1}[1]$$

shows that $\Ext^1_A(M_n,M_{n+1})\simeq \Ext^i(M_n,M_{n+i})$. If $\Ext^1_A(M_n,M_{n+1})\neq 0$, then $B(M_n)=\infty$. Thus, it must be the case that $\Ext^1_A(M_n,M_{n+1})=0$. Therefore the triangle

 $$
M_{n+1} \xra{g_{n+1}} P_{n} \xra{f_{n}} M_{n} \to M_{n+1}[1]$$

splits in $\D(A)$ and $M_n\in \mathcal{P}$. By \cite[Theorem 1.1]{minamoto}, $\projdim_A(M)<\infty$.

\end{proof}

\begin{prop}\label{prop:Gor_auslander_bounds}
    Let $A$ be as in \Cref{setup}. Then the following are equivalent:
    
    \begin{enumerate}
        \item $A$ is Gorenstein.

        \item There exists $0\not\simeq M\in \Db(A)$ such that $\injdim_A(M)<\infty$, $B(M)<\infty$ and $P_A(M,A)<\infty$.
    \end{enumerate}
    
\end{prop}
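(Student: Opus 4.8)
\textbf{Proof proposal for \Cref{prop:Gor_auslander_bounds}.}

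The plan is to prove both implications by leveraging the characterization of Gorenstein dg-algebras in \Cref{def:Gorenstein} together with the machinery already assembled. For the direction (1) $\Rightarrow$ (2), I would simply exhibit $M = A$ itself: if $A$ is Gorenstein, then $\injdim_A(A) < \infty$ by \Cref{def:Gorenstein}(2), and $\RHom_A(A,A) \simeq A \in \Db(A)$, so $P_A(A,A) < \infty$ is immediate. To see $B(A) < \infty$, note that $A \in \mathcal{P} = \mathrm{add}(A)$ has $\projdim_A(A) = 0$, and by the third item in the remark following \Cref{def:Auslander_Bounds}, $\projdim_A(M) < \infty$ forces $B(M) = \projdim_A(M)$. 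Thus $B(A) = 0 < \infty$, and $A$ is the required nonzero object.

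The substantive direction is (2) $\Rightarrow$ (1). Given $M$ with $\injdim_A(M) < \infty$, $B(M) < \infty$, and $P_A(M,A) < \infty$, the goal is to produce enough finiteness to invoke \Cref{prop:gdim} (which concludes Gorensteinness from finite injective dimension of some object together with a finite-Gorenstein-dimension object in $\langle M \rangle$). The natural strategy is to route through \Cref{lem:fpd}: that lemma shows $\projdim_A(M) < \infty$ provided the \emph{three} quantities $B(M)$, $P_A(M,A)$, and $P_A(M,M)$ are all finite. I already have the first two by hypothesis, so the crux is establishing $P_A(M,M) < \infty$. Since $\injdim_A(M) < \infty$, the definition of injective dimension (\Cref{def:injective}) gives $\Ext^i_A(N, M) = 0$ for every $N \in \Db(A)$ and all $i > \injdim_A(M) - \inf(N)$; applying this with $N = M$ bounds $\Ext^i_A(M,M)$ in high degrees and hence forces $P_A(M,M) < \infty$. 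With all three inputs in hand, \Cref{lem:fpd} yields $\projdim_A(M) < \infty$.

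Once $\projdim_A(M) < \infty$ is known, $M$ is a perfect object, so $M \in \langle A \rangle$ and in particular $\RHom_A(M,A) \in \Db(A)$ with $M$ reflexive and $\mathrm{Gdim}_A(M) < \infty$. Now both hypotheses of \Cref{prop:gdim} are satisfied: $\injdim_A(M) < \infty$ by assumption, and taking $N_M = M \in \langle M \rangle$ gives a nonzero object of finite Gorenstein dimension. Therefore \Cref{prop:gdim} concludes that $A$ is Gorenstein, completing the cycle.

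I expect the main obstacle to be the clean deduction of $P_A(M,M) < \infty$ from $\injdim_A(M) < \infty$, specifically keeping the degree bookkeeping in \Cref{def:Auslander_Bounds} and \Cref{def:injective} consistent (the shift by $\sup(N)$ in the definition of $P_A$ versus the shift by $\inf(N)$ in the injective-dimension bound). A careful reconciliation of these truncation conventions is what makes the argument go through, but the conceptual path---finite injective dimension controls $\Ext^i_A(M,M)$ in high degrees, which together with the two given finiteness hypotheses feeds \Cref{lem:fpd}, which feeds \Cref{prop:gdim}---is the skeleton I would follow.
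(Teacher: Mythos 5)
Your proposal is correct and follows essentially the same path as the paper's own proof: take $M=A$ for (1) $\Rightarrow$ (2), and for (2) $\Rightarrow$ (1) use \Cref{lem:fpd} to get $\projdim_A(M)<\infty$, hence $M\in\langle A\rangle$ is reflexive with $\mathrm{Gdim}_A(M)<\infty$, and conclude via \Cref{prop:gdim}. You even spell out a step the paper leaves implicit --- that $\injdim_A(M)<\infty$ forces $P_A(M,M)<\infty$ (via $\Ext^i_A(M,M)=0$ for $i>\injdim_A(M)-\inf(M)$), which is needed before \Cref{lem:fpd} can be invoked since $B(M)<\infty$ alone does not supply it --- and your degree bookkeeping there is correct.
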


\begin{proof}
    For (1) $\implies$ (2), take $M=A$. For (2) $\implies$ (1), \Cref{lem:fpd} implies $\projdim_A(M)<\infty$ and hence $M\in \langle A\rangle$ (see \cite[Proposition 2.26]{minamoto}). This implies $M$ is reflexive and \cite[Proposition 4.1]{hu2023gdimensionsdgmodulescommutativedgrings} implies $\mathrm{Gdim}_A(M)<\infty$. \Cref{prop:gdim} then implies $A$ is Gorenstein, finishing the proof.
\end{proof}

\textbf{Proof of (1) $\iff$ (5).}

\begin{prop}[cf. \cite{DGI}, Proposition 4.5]
    Let $A$ be as in \Cref{setup}. Let $k$ denote the residue field of $\H^0(A)$. Then the following are equivalent:

    \begin{enumerate}
        \item $A$ is Gorenstein.

        \item For every $M\in \Db(A)$ such that $\injdim_A(M)<\infty$, there exists $0\not \simeq N_M\in \langle M\rangle \cap \langle k\rangle $ such that $B(N_M)<\infty$ and $P_A(N_M,A)<\infty$.
    \end{enumerate}
\end{prop}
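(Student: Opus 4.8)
The plan is to prove the two implications separately; the reverse implication reduces to results already in hand, while the forward one calls for an explicit choice of $N_M$. For (2) $\implies$ (1), I would manufacture a single object to feed into \Cref{prop:Gor_auslander_bounds}. Exactly as in the proof of \Cref{prop:G_0}, let $E \in \D(A)$ be the object supplied by Brown representability, satisfying $\injdim_A(\RHom_A(-,E)) = \flatdim_A(-)$ and $\H^i(\RHom_A(-,E)) \cong \Hom_{\H^0(A)}(\H^{-i}(-), \bar E)$, and set $B := \RHom_A(K, E)$ where $K := A//\underline{x}$ for a minimal generating set $\underline{x}$ of $\m$. Since $K$ is perfect, $\flatdim_A(K) < \infty$, so $B$ is a nonzero object of $\Db(A)$ with $\injdim_A(B) < \infty$. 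Applying the hypothesis to $M = B$ produces a nonzero $N_B \in \langle B \rangle \cap \langle k \rangle$ with $B(N_B) < \infty$ and $P_A(N_B, A) < \infty$. As the objects of finite injective dimension form a thick subcategory containing $B$, the membership $N_B \in \langle B \rangle$ forces $\injdim_A(N_B) < \infty$; then \Cref{prop:Gor_auslander_bounds} applied to $N_B$ gives that $A$ is Gorenstein.

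For (1) $\implies$ (2), fix $M \in \Db(A)$ with $\injdim_A(M) < \infty$ and take $N_M := M \Lotimes_A K$ with $K = A//\underline{x}$ as above. The membership $N_M \in \langle M \rangle \cap \langle k \rangle$ I would verify by functoriality: applying the triangulated functor $M \Lotimes_A (-)$ to $K \in \langle A \rangle$ gives $N_M \in \langle M \rangle$, and applying it to $K \in \langle k \rangle$ (legitimate since $\H(K)$ has finite length) lands $N_M$ in $\langle M \Lotimes_A k \rangle \subseteq \langle k \rangle$, the final containment holding because $M \Lotimes_A k$ is a finite direct sum of shifts of $k$. Nonvanishing of $N_M$ follows from a Nakayama-type argument, e.g. by computing $N_M \Lotimes_A k$. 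For the finiteness conditions, I would invoke the Gorenstein equivalence $\injdim_A(M) < \infty \iff \projdim_A(M) < \infty$: with $M$ of finite projective dimension and $K$ perfect, $N_M$ has finite projective dimension, so $B(N_M) = \projdim_A(N_M) < \infty$ by the remark after \Cref{def:Auslander_Bounds}, while $P_A(N_M, A) < \infty$ is immediate from $\RHom_A(N_M, A) \in \Db(A)$.

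The step needing the most care is the equivalence $\injdim_A(M) < \infty \iff \projdim_A(M) < \infty$ for $M \in \Db(A)$ over a Gorenstein $A$, used in the forward direction. The easy direction holds since $\langle A \rangle$ lies among the objects of finite injective dimension (as $\injdim_A(A) < \infty$, a thick condition); for the converse I would use the self-duality $D := \RHom_A(-, A)$ of \Cref{def:Gorenstein}: $D$ is an anti-equivalence of $\Db(A)$ fixing $\langle A \rangle$ (because $D(A) \simeq A$), so $M$ is perfect iff $D(M)$ is, and combining this with the dualizing-module fact that $D$ interchanges finite injective and finite projective dimension yields $\injdim_A(M) < \infty \Rightarrow \projdim_A(M) < \infty$. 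Pinning down a citable form of this last swap in the dg setting is the main obstacle; by contrast, the membership, nonvanishing, and Auslander-bound computations for $N_M$ are routine triangulated-category manipulations.
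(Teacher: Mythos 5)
Your proof of (2) $\implies$ (1) is essentially the paper's: both construct $B=\RHom_A(A/\!/\underline{x},E)$ via Brown representability as in \Cref{prop:G_0}, note $\injdim_A(B)<\infty$, deduce $\injdim_A(N_B)<\infty$ from thickness of the finite-injective-dimension condition (the paper leaves this step implicit; you make it explicit), and finish with \Cref{prop:Gor_auslander_bounds}. Your proof of (1) $\implies$ (2) is correct but takes a genuinely different route. The paper takes the uniform witness $N_M = A/\!/\underline{x}$ and must then prove $A/\!/\underline{x}\in\langle M\rangle$; this is where the real work lies, and the paper does it by a support-theoretic stratification theorem (\cite[Theorem 4.16]{lifting_stratifications_ttcategories}) applied to the compact objects $M$ and $A/\!/\underline{x}$. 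You instead take $N_M = M\Lotimes_A(A/\!/\underline{x})$, for which membership in $\langle M\rangle$ is automatic by applying the triangulated functor $M\Lotimes_A(-)$ to $A/\!/\underline{x}\in\langle A\rangle$; the price is a (routine) Nakayama-type nonvanishing argument, and membership in $\langle k\rangle$ goes through the same finite-length observation the paper uses via \cite[3.10]{DGI}. Your version is arguably more elementary, trading a stratification theorem for functoriality plus Nakayama; the paper's version has the aesthetic advantage that a single object $A/\!/\underline{x}$ works for every $M$. Both arguments hinge on the same key fact, namely that over a Gorenstein $A$, $\injdim_A(M)<\infty$ forces $\projdim_A(M)<\infty$; the citation you were missing is exactly what the paper invokes here, \cite[Theorem 4.5]{local_CM_modules}, so the step you flagged as your main obstacle is indeed available in the dg literature (your duality sketch via $\RHom_A(-,A)$ is the standard proof of it). One presentational point: your verification that $M\Lotimes_A k$ is a \emph{finite} direct sum of shifts of $k$ already requires $M$ to be perfect, i.e., the finite projective dimension of $M$, so that equivalence should be established before, not after, the membership argument.
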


\begin{proof}
    For (1) implies (2), let $\underline{x}$ denote a minimal generating set for the maximal ideal of $\H^0(A)$. For every $M\in \Db(A)$ such that $\injdim_A(M)<\infty$, we will show that we can take $N_M=A//\underline{x}$. Since $A$ is Gorenstein and $\injdim_A(M)<\infty$, we have $\projdim_A(M)<\infty$ by \cite[Theorem 4.5]{local_CM_modules}. Since $M$ and $A//\underline{x}$ are compact objects of $\D(A)$ and $\mathrm{Supp}(A//\underline{x})\subseteq \mathrm{Supp}(M)$, $A//\underline{x}\in \langle M\rangle$ by \cite[Theorem 4.16]{lifting_stratifications_ttcategories}. Since $\H^n(A//\underline{x})\in \langle k\rangle $ for all $n$, $A//\underline{x}\in \langle k\rangle$ by \cite[3.10]{DGI}. Finally, $P_A(A//\underline{x},A)<\infty$ since $\injdim_A(A)<\infty$ and $B(A//\underline{x})=\projdim_A(A//\underline{x})<\infty$.

    \par For (2) implies (1), as in the proof of \Cref{prop:G_0}, there exists $B\in \Db(A)$ such that $\injdim_A(B)<\infty$. Since $\injdim_A(N_B)<\infty$, applying \Cref{prop:Gor_auslander_bounds} with $M=N_B$ finishes the proof.
\end{proof}

\textbf{Proof of (1) $\iff$ (6).}

\begin{lem}\label{lem:MCM_sup}
    Let $A$ be a dg-algebra as in \Cref{setup}. If $M\in \mathrm{MCM}(A)$ with $\sup(M)=0$ and $\injdim_A(M)<\infty$, then $\sup \RHom_A(M,M)=0$.
\end{lem}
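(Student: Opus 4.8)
The lower bound is immediate: since $M\not\simeq 0$ and $\sup(M)=0$, the identity furnishes a nonzero class in $\H^0\RHom_A(M,M)=\Hom_{\D(A)}(M,M)$, so $\sup\RHom_A(M,M)\ge 0$. The content is therefore the vanishing $\Ext^i_A(M,M)=0$ for all $i>0$, i.e. $\sup\RHom_A(M,M)\le 0$. The model is Ischebeck's classical theorem: over a local ring, if $M$ has finite injective dimension then $\sup\RHom(N,M)=\depth R-\depth N$ for every finitely generated $N\neq 0$; taking $N=M$ maximal Cohen--Macaulay over a Cohen--Macaulay ring yields $\depth R-\depth M=0$. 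I would port this argument to the dg setting.

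The plan is to first establish the dg depth-sensitivity equality
\[
\sup\RHom_A(M,M)=\depth_A(A)-\depth_A(M),
\]
valid because $\injdim_A(M)<\infty$ and $\H(M)$ is finitely generated. I would prove this either by invoking the relevant depth-sensitivity results from \cite{Shaul_CM,local_CM_modules}, or by reducing to the cohomologically $\m$-adically complete case (which changes neither the cohomology of $\RHom_A(M,M)$ nor any invariant involved) and exploiting a dualizing dg-module $\om$, guaranteed to exist by \cite[Proposition 7.21]{Shaul_injective}. In the latter approach, finiteness of $\injdim_A(M)$ is equivalent to finiteness of $\projdim_A(M^\dagger)$ for $M^\dagger:=\RHom_A(M,\om)$, and the adjunction
\[
\RHom_A(M,M)\simeq \RHom_A\!\big(M\Lotimes_A M^\dagger,\om\big)
\]
converts the computation of $\sup\RHom_A(M,M)$ into a depth computation against the dualizing module, controlled by $\depth_A(A)$ and $\depth_A(M)$.

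It then remains to show $\depth_A(A)=\depth_A(M)$. Here I would use $M\in\mathrm{MCM}(A)$ together with the facts that local cohomology computes depth and local-cohomology dimension, namely $\inf\dGamma_\m(M)=\depth_A(M)$ and $\sup\dGamma_\m(M)=\mathrm{lc.dim}(M)$. Since $\sup(M)=0$, the maximal Cohen--Macaulay condition gives $\mathrm{lc.dim}(M)=\dim\H^0(A)$, and the Cohen--Macaulay condition $\amp\dGamma_\m(M)=\amp(A)$ then forces $\depth_A(M)=\dim\H^0(A)-\amp(A)$. The same computation applied to $A$ itself---using that the existence of a finite-injective-dimension module forces $A$ to be maximal Cohen--Macaulay over itself, a dg-analogue of the Bass conjecture available in this framework---gives $\depth_A(A)=\dim\H^0(A)-\amp(A)$. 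Combining, $\depth_A(A)=\depth_A(M)$ and hence $\sup\RHom_A(M,M)=0$.

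The main obstacle is the first step: pinning down the dg depth-sensitivity equality with the correct normalization (the $\sup$- and $\inf$-shifts in \Cref{def:injective} and in the definition of depth must be tracked carefully), and checking that the needed duality and finiteness statements hold verbatim for $M$ of finite injective dimension. The computation of $\depth_A(A)$---that a finite module of finite injective dimension forces $A$ to be Cohen--Macaulay---is the other delicate input; if it is not directly citable, I would instead derive $\depth_A(A)=\depth_A(M)$ from the duality in the complete case, where $M$ and $M^\dagger$ relate the depth of $A$ to that of $M$ without a separate appeal to the Bass conjecture.
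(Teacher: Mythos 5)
Your proposal is correct in outline and its first half coincides with the paper's proof; the divergence is in how the depth identity is closed. The paper's proof is a two-citation argument: the depth-sensitivity (Ischebeck-type) formula $\sup \RHom_A(M,M)=\injdim_A(M)-\depth_A(M)$ from \cite[Lemma 4.6]{local_CM_modules} --- precisely one of the sources you name for your first step --- followed by the dg Bass formula \cite[Theorem 2.33]{Minamoto2}, which yields $\injdim_A(M)=\depth_A(M)$ and finishes the proof immediately. You instead close the gap by proving $\depth_A(A)=\depth_A(M)$ by hand: the MCM hypothesis together with Grothendieck vanishing/non-vanishing for $\dGamma_{\m}$ gives $\depth_A(M)=\dim \H^0(A)-\amp(A)$, and a dg analogue of the Bass conjecture (a nonzero finite-injective-dimension object forces $A$ to be local Cohen--Macaulay over itself) gives the same value for $\depth_A(A)$. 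Your arithmetic here is right, and the point you treat gingerly --- that $A$ is \emph{maximal} CM rather than merely CM over itself --- is automatic: each $\H^n(A)$ is finitely generated over $\H^0(A)$, so $\dim \H^n(A)+n<\dim \H^0(A)$ for $n<0$, whence $\mathrm{lc.dim}(A)=\dim \H^0(A)$ always. The trade-off between the two routes: yours makes explicit exactly where the MCM hypothesis enters (in the paper this is absorbed into the hypotheses of the cited formulas), but it leans on Bass-conjecture-strength input --- classically a Peskine--Szpiro/Roberts-level theorem --- plus a completion reduction to secure a dualizing dg-module, whereas the paper's appeal to the Bass \emph{formula} for the module $M$ itself is lighter and shorter. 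Your own hedge about the citability of the dg Bass conjecture is the one genuine risk in your plan; the cleanest way to remove it is exactly the paper's move: quote the Bass formula for the finite-injective-dimension dg-module $M$ rather than deducing Cohen--Macaulayness of $A$ first.
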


\begin{proof}

By \cite[Lemma 4.6]{local_CM_modules}, we have $\sup\RHom_A(M,M)=\injdim_A(M)-\depth_A(M)$. By the Bass formula, \cite[Theorem 2.33]{Minamoto2}, we have $\injdim_A(M)=\depth_A(M)$, so that \\
$\sup\RHom_A(M,M)=0$.

\end{proof}

\begin{prop}
    Let $A$ be a dg-algebra as in \Cref{setup}. Then the following are equivalent:

    \begin{enumerate}
        \item $A$ is Gorenstein.

        \item There exists $0\not\simeq M\in \mathrm{MCM}(A)$ such that $\injdim_A(M)<\infty$, $\sup(M)=\sup \RHom_A(M,A)=0$ and $B(M)<\infty$.
    \end{enumerate}
\end{prop}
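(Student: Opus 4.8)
The plan is to prove (1) $\implies$ (2) by checking that $M = A$ is a witness, and to prove the substantive direction (2) $\implies$ (1) by producing a module of finite projective dimension and then applying \Cref{prop:gdim}.

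For (1) $\implies$ (2), I would take $M = A$. Since $A$ is non-positive with $\H^0(A)$ local (hence nonzero), we have $\sup(A) = 0$, and $\RHom_A(A,A) \simeq A$ gives $\sup\RHom_A(A,A) = 0$. The Gorenstein hypothesis gives $\injdim_A(A) < \infty$ by \Cref{def:Gorenstein}, while $\projdim_A(A) = 0$ yields $B(A) = \projdim_A(A) = 0 < \infty$ by the remark following \Cref{def:Auslander_Bounds}. The only point requiring care is that $A \in \mathrm{MCM}(A)$: a Gorenstein dg-algebra is maximal Cohen--Macaulay over itself, which I would confirm by checking $\amp(\dGamma_{\m}(A)) = \amp(A)$ and $\mathrm{lc.dim}(A) = \dim(\H^0(A))$ from the local cohomology theory for such dg-algebras (see \cite{local_CM_modules, Shaul_CM}).

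For (2) $\implies$ (1), the target is $\projdim_A(M) < \infty$, after which \Cref{prop:gdim} finishes the argument. First, because $\sup(A) = 0$, the hypothesis $\sup\RHom_A(M,A) = 0$ is exactly the statement $P_A(M,A) = 0 < \infty$. Next, since $M \in \mathrm{MCM}(A)$ with $\sup(M) = 0$ and $\injdim_A(M) < \infty$, \Cref{lem:MCM_sup} gives $\sup\RHom_A(M,M) = 0$, and as $\sup(M) = 0$ this translates into $P_A(M,M) = 0 < \infty$. Combined with the hypothesis $B(M) < \infty$, all three quantities $B(M)$, $P_A(M,A)$, $P_A(M,M)$ are finite, so \Cref{lem:fpd} yields $\projdim_A(M) < \infty$.

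Finally, finite projective dimension forces $M \in \langle A \rangle$ by \cite[Proposition 2.26]{minamoto}, so $M$ is reflexive and $\mathrm{Gdim}_A(M) < \infty$. Taking $N_M = M \in \langle M \rangle$, the hypotheses of \Cref{prop:gdim} are met and $A$ is Gorenstein. I expect the main obstacle to be the (1) $\implies$ (2) verification that $A$ is itself maximal Cohen--Macaulay, since the reverse direction is largely an assembly of the preceding lemmas, with \Cref{lem:MCM_sup} supplying the one nonformal input, namely the finiteness (indeed vanishing) of $P_A(M,M)$.
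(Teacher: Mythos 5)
Your proof is correct, and its skeleton matches the paper's: both directions hinge on the same inputs ((1)$\implies$(2) via $M=A$ and the local Cohen--Macaulay results of \cite{Shaul_CM}; (2)$\implies$(1) via \Cref{lem:MCM_sup}, finiteness of projective dimension, \cite[Proposition 4.1]{hu2023gdimensionsdgmodulescommutativedgrings}, and \Cref{prop:gdim}). The one genuine difference is the middle step of the converse. The paper does \emph{not} invoke \Cref{lem:fpd}: it re-runs the sppj-resolution argument inline, choosing a resolution of $M$, showing $\sup\RHom_A(M,P_i)\le 0$, deducing $\Ext^1_A(M,M_1)\simeq\Ext^i_A(M,M_i)$, and deriving a contradiction with $B(M)<\infty$ from the strict monotonicity of $P_A(M,M_i)$, so that the first triangle splits and $\projdim_A(M)<\infty$. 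You instead observe that the hypotheses of \Cref{lem:fpd} are already verified on the nose: $\sup(A)=0$ turns $\sup\RHom_A(M,A)=0$ into $P_A(M,A)=0$, and \Cref{lem:MCM_sup} together with $\sup(M)=0$ turns $\sup\RHom_A(M,M)=0$ into $P_A(M,M)=0$, so \Cref{lem:fpd} immediately gives $\projdim_A(M)<\infty$. Since \Cref{lem:fpd} is itself proved by exactly the sppj splitting argument, the mathematical content is the same, but your routing is cleaner and shows the proposition is a formal consequence of the two lemmas, making the paper's inline repetition of the sppj argument unnecessary; the paper's version, in exchange, is self-contained and avoids checking that the hypotheses of \Cref{lem:fpd} specialize correctly. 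Your endgame (finite projective dimension gives $M\in\langle A\rangle$ by \cite[Proposition 2.26]{minamoto}, hence reflexivity and $\mathrm{Gdim}_A(M)<\infty$, then \Cref{prop:gdim} with $N_M=M$) is exactly the paper's, with the reflexivity step made explicit where the paper leaves it implicit.
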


\begin{proof}
    For the forward implication, $M=A$ works by \cite[Theorem 2 and 3]{Shaul_CM}. For the backward implication, choose an sppj resolution $P$ of $M$ as in \Cref{def:sppj}. By \Cref{lem:MCM_sup}, $\sup \RHom_A(M,M)=0$. Combining this with $\sup\RHom_A(M,A)=0$, we see that $\sup \RHom_A(M,M_1)\leq 1$. For each $i\geq 0$, we have

    \[\sup\RHom_A(M,A[-\sup(M_i)])=\sup\RHom_A(M,A)+\sup(M_i)\leq \sup\RHom_A(M,A)=0. \]

    Thus for each $i\geq 0$, we have $\sup\RHom_A(M,P_i)\leq \sup\RHom_A(M,A)=0$. This implies that for each $i\geq 1$, we have $\Ext_A^1(M,M_1)\simeq \Ext_A^i(M,M_i)$. Suppose $\Ext_A^1(M,M_1)\neq 0$. We then have for $n>i$, $\sup\RHom_A(M,M_i)<\sup\RHom_A(M,M_n)$. Therefore 

    \[P_A(M,M_i)=\sup \RHom_A(M,M_i)-\sup(M_i)<\sup \RHom_A(M,M_{i+1})-\sup(M_{i+1})=P_A(M,M_{i+1})\]

which implies $\lim P_A(M,M_i)=\infty$. This contradicts the assumption that $B(M)<\infty$. Thus $\Ext_A^1(M,M_1)= 0$ and the triangle

   $$
M_1 \xra{} P_0 \xra{} M \to M[1]$$

splits, so that $\projdim_A(M)<\infty$. \cite[Proposition 4.1]{hu2023gdimensionsdgmodulescommutativedgrings} implies that $\mathrm{Gdim}_A(M)<\infty$ and \Cref{prop:gdim} then implies that $A$ is Gorenstein, finishing the proof.
    
\end{proof}

\textbf{Proof of (1) $\iff$ (7).}

\begin{lem}\label{lem:koszulcomplex}
    Let $A$ be as in \Cref{setup}. Let $M\in \Db(A)$ and $G \xra{\simeq} M$ a semifree resolution. Assume there exists an integer $n$ such that $G/F^nG$ has cohomology concentrated in a single degree. If $N\in \Db(A)$ is such that $\RHom_A(M,N)\in \Db(A)$, then $\RHom_A(\H^0(A),N)\in \Db(A)$. If in addition $\H^0(A)$ is a field, then $\injdim_A(N)<\infty$.

\end{lem}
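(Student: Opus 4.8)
The key structural input is the semifree filtration $F^\bullet G$ together with the hypothesis that $G/F^nG$ has cohomology concentrated in a single degree for some $n$. The strategy is to build $M$ (equivalently $G$) out of finitely many free pieces plus one "tail" piece that is cohomologically simple, so that reflexivity/finiteness of $\RHom_A(M,N)$ propagates to each piece, and in particular to a shift of $\H^0(A)$.

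\begin{proof}[Proof sketch]
First I would exploit the filtration. Write $L \ce G/F^nG$. By hypothesis $\H(L)$ is concentrated in a single cohomological degree, say degree $d$, so that $L \simeq \H^d(L)[-d]$ in $\D(A)$, and $\H^d(L)$ is a finitely generated $\H^0(A)$-module (since $M \in \Db(A)$ and the filtration is finite in each degree). The short exact sequence of dg-modules $0 \to F^nG \to G \to L \to 0$ yields an exact triangle
\[
F^nG \longrightarrow M \longrightarrow L \longrightarrow F^nG[1]
\]
in $\D(A)$, where $F^nG$ is a \emph{finite} semifree module, i.e.\ built from finitely many copies of shifts $A[i]$ via finitely many cones. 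Thus $F^nG \in \langle A \rangle$ is a perfect (compact) object, whence $\RHom_A(F^nG,N) \in \Db(A)$ automatically. Since $\RHom_A(M,N) \in \Db(A)$ by hypothesis, the triangle and the two-out-of-three property in the triangulated category $\Db(A)$ force $\RHom_A(L,N) \in \Db(A)$. Because $L \simeq \H^d(L)[-d]$, this gives $\RHom_A(\H^d(L),N) \in \Db(A)$ after a shift.

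Next I would descend from the finitely generated $\H^0(A)$-module $\H^d(L)$ to the residue field, and then to $\H^0(A)$ itself. The module $\H^d(L)$, regarded in $\D(A)$ via the map $A \to \H^0(A)$, admits a finite filtration by $\H^0(A)$-submodules whose successive quotients are cyclic, hence (over the Noetherian local ring $\H^0(A)$) each such quotient is a quotient of $\H^0(A)$; dévissage then shows $\H^d(L) \in \langle \H^0(A) \rangle$ inside $\D(A)$, so $\RHom_A(-,N)$ sends it into $\Db(A)$ iff it sends $\H^0(A)$ there. This yields $\RHom_A(\H^0(A),N) \in \Db(A)$, the first claim. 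For the second claim, suppose $\H^0(A)$ is a field; then $\H^0(A) = k$ is the residue field, and $\RHom_A(k,N) \in \Db(A)$ means in particular $\sup \RHom_A(k,N) < \infty$. By the characterization $\injdim_A(N) = \sup \RHom_A(k,N)$ (Minamoto, \cite[Corollary 2.31]{minamoto}, as used already in the proof of \Cref{prop:gdim}), we conclude $\injdim_A(N) < \infty$.

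The main obstacle is the dévissage step: making precise that a finitely generated $\H^0(A)$-module lies in the thick subcategory $\langle \H^0(A)\rangle$ of $\D(A)$, and that when $\H^0(A)$ is a field this collapses to $\langle k\rangle$. Over a Noetherian local base this is the standard filtration-by-cyclic-quotients argument, but one must check it interacts correctly with the $A$-action (all modules are pulled back along $A \to \H^0(A)$, so the triangles realizing the filtration are genuinely triangles in $\D(A)$, not merely in $\D(\H^0(A))$). A cleaner route that sidesteps full dévissage is to observe that when $\H^0(A)$ is a field, $\H^d(L)$ is already a finite-dimensional $k$-vector space, hence a finite direct sum of shifts of $k$ in $\D(A)$, so $\RHom_A(\H^d(L),N) \in \Db(A)$ directly forces $\RHom_A(k,N) \in \Db(A)$ with no further work; this is all that is needed for the injective-dimension conclusion. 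The first (general) claim still requires the dévissage, and I would phrase it as: $\RHom_A(M,N) \in \Db(A)$ together with $M$ having $\H^0(A)$ in its thick closure forces $\RHom_A(\H^0(A),N) \in \Db(A)$.
\end{proof}
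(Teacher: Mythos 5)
Your argument tracks the paper's proof up through the key reduction: the triangle $F^nG \to G \to G/F^nG$, the identification $G/F^nG \simeq \H^d(G/F^nG)[-d]$ via smart truncation, and the two-out-of-three step giving $\RHom_A(\H^d(G/F^nG),N)\in \Db(A)$. (A side issue, shared with the paper: the definition of ``free'' allows infinite direct sums of shifts $A[i]$, so $F^nG$ need not be compact; what is really needed, and what the paper also asserts without comment, is just that $\RHom_A(F^nG,N)$ is bounded.) The genuine gap is the d\'evissage step in your proof of the first, general claim, and it fails in two ways. First, the assertion that a finitely generated $\H^0(A)$-module filtered by cyclic quotients lies in $\langle \H^0(A)\rangle$ is false: thick subcategories are closed under cones, shifts and retracts, not under quotients, and a cyclic module $\H^0(A)/I$ lies in $\langle \H^0(A)\rangle$ only when it is perfect over $\H^0(A)$. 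For instance, with $A=\H^0(A)=k[x]/(x^2)$, the residue field $k$ is a quotient of $\H^0(A)$ but has infinite projective dimension, so $k\notin\langle \H^0(A)\rangle$. Second, even granting the containment, it points the wrong way: your ``iff'' is only an ``if.'' Since $\{X\in\D(A) : \RHom_A(X,N)\in\Db(A)\}$ is itself a thick subcategory, knowing it contains $\H^d(G/F^nG)$ lets you conclude it contains $\langle \H^d(G/F^nG)\rangle$; to capture $\H^0(A)$ you need $\H^0(A)\in\langle \H^d(G/F^nG)\rangle$, not the reverse containment you argue for.

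The paper closes exactly this step by a different mechanism: it asserts that $L=\H^d(G/F^nG)$ is \emph{free} over $\H^0(A)$, so that by adjunction $\RHom_A(L,N)\simeq \RHom_{\H^0(A)}(L,\RHom_A(\H^0(A),N))$ is a product of copies of $\RHom_A(\H^0(A),N)$, of which $\RHom_A(\H^0(A),N)$ is a retract --- i.e., precisely the containment $\H^0(A)\in\langle L\rangle$ in the direction you need. (That freeness assertion is the real content, and it is automatic when $\H^0(A)$ is a field.) Your ``cleaner route'' for the field case is correct and is essentially this same argument: $\H^d(G/F^nG)$ is then a nonzero finite direct sum of copies of $k$, so $\RHom_A(k,N)\in\Db(A)$, and Minamoto's criterion $\injdim_A(N)=\sup\RHom_A(k,N)$ finishes. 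Since the main theorem only invokes the lemma for $K=A//\underline{x}$, whose $\H^0$ is the residue field, your proposal does establish the case that matters downstream; but as written it does not prove the lemma's first, general claim.
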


\begin{proof}
Consider the exact sequence

   $$
0 \xra{} F^nG \xra{} G \to G/F^nG \xra{} 0 .$$

Since $G/F^nG$ has cohomology concentrated in a single degree, using smart truncations as in \Cref{remark:smart}, we see that $G/F^nG$ is isomorphic in $\D(A)$ to an object of the form $L[-a]$ where $L$ is an $\H^0(A)$-module. Moreover, since $\RHom_A(F^nG, N)$ and $\RHom_A(G,N)$ are cohomologically bounded, so is $\RHom_A(L,N)$. By adjunction, we see that $\RHom_A(L,N)\simeq \RHom_{\H^0(A)}(L,\RHom_A(\H^0(A),N))$. Since $L$ is free over $\H^0(A)$, $\RHom_{\H^0(A)}(L,\RHom_A(\H^0(A),N))$ is isomorphic in $\D(A)$ to a product of copies of $\RHom_A(\H^0(A),N)$. Thus $\RHom_A(\H^0(A),N)\in \Db(A)$ as well. Finally, if $\H^0(A)$ is a field, \cite[Corollary 2.31]{Minamoto2} implis that $\injdim_A(N)<\infty$.

\end{proof}

\begin{prop}
      Let $A$ be as in \Cref{setup}. Let $\underline{x}=x_1,\dots,x_n$ be a generating set for the maximal ideal of $\H^0(A)$, and set $K:=A//\underline{x}$. Then the following are equivalent.

      \begin{enumerate}
          \item $A$ is Gorenstein.

          \item There exists $0\not\simeq M\in \Db(K)$ with semifree resolution $G\xra{\simeq} M$ such that 

          \begin{enumerate}
              \item $G/F^nG$ has cohomology concentrated in a single degree for some integer $n$.

              \item $\RHom_K(M,K)\in \Db(K)$.
          \end{enumerate}
      \end{enumerate}
\end{prop}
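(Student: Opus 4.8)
The plan is to route both directions through the Gorenstein dualizing machinery and the characterization of Gorensteinness via finite injective dimension of $A$ over itself. For the forward direction $(1) \Rightarrow (2)$, I would take the obvious witness: set $M = K = A//\underline{x}$, with its tautological semifree resolution $G \xra{\simeq} K$ coming from iterating the Koszul-type cone construction in \Cref{derived_KC}. The key observation is that the Koszul filtration on $G$ is finite — only finitely many free pieces appear since $\underline{x}$ is a finite sequence — so for $n$ large enough $G/F^nG$ is already the whole of $G$ (or differs from it by something with cohomology in one degree), giving condition (a). For condition (b), I would use that $A$ Gorenstein means $A$ is a dualizing dg-module for itself (\Cref{def:Gorenstein}(1)), and by the remark following \Cref{def:dualizing2} that $\RHom_A(K,A)$ is dualizing for $K$; in particular $\RHom_K(K,K) \simeq K \in \Db(K)$, so (b) holds trivially with $M = K$.

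For the substantive direction $(2) \Rightarrow (1)$, the heart of the argument is \Cref{lem:koszulcomplex}, applied over the base $K$ rather than $A$. First I would observe that $\H^0(K) = \H^0(A)/(\underline{x})$ is $\H^0(A)/\m$, i.e.\ the residue field $k$, since $\underline{x}$ generates $\m$. Thus $\H^0(K)$ is a field. Now apply \Cref{lem:koszulcomplex} with the dg-algebra $K$ in place of $A$, the given $M$ and its resolution $G$, and $N = K$: hypotheses (a) and (b) of the proposition are exactly the hypotheses of the lemma, and since $\H^0(K)$ is a field the lemma concludes $\injdim_K(K) < \infty$. By \Cref{def:Gorenstein}(2) this says precisely that $K$ is a Gorenstein dg-algebra.

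It then remains to descend Gorensteinness from $K = A//\underline{x}$ back to $A$. The plan here is to use that $K$ is obtained from $A$ by coning off the sequence $\underline{x}$, which is the dg-analog of quotienting by a Koszul complex; passing to the Koszul complex preserves and reflects the Gorenstein property. Concretely, one direction of this is standard (Gorenstein ascends along $A \to A//\underline{x}$), and the reflection statement — that $A//\underline{x}$ Gorenstein forces $A$ Gorenstein when $\underline{x}$ generates the maximal ideal of $\H^0(A)$ — should follow from the behavior of injective dimension under the cone construction together with $\injdim_A(A) = \sup\RHom_A(k,A)$ from \cite[Corollary 2.31]{Minamoto2}. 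I expect this descent step to be the main obstacle: one must verify that finiteness of $\injdim_K(K)$ propagates to finiteness of $\injdim_A(A)$, controlling the cohomology of $\RHom_A(k,A)$ via the finite filtration relating $K$ and $A$. The cleanest route is likely to identify $\RHom_A(k, A)$ with a shift of $\RHom_K(k, K)$ (or relate the two by a bounded spectral-sequence / filtration argument), so that boundedness above of the latter yields boundedness above of the former.
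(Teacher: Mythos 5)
Your backward direction matches the paper's: apply \Cref{lem:koszulcomplex} over $K$ with $N=K$, using that $\H^0(K)=\H^0(A)/(\underline{x})=k$ is a field, to conclude $\injdim_K(K)<\infty$, i.e.\ $K$ is Gorenstein. But your forward direction contains a genuine error. You propose the witness $M=K$. The proposition requires $M\in\Db(K)$ with a semifree resolution \emph{over $K$}; the iterated-cone filtration you invoke is a filtration of $K$ as an $A$-module, not as a $K$-module. Over $K$, the semifree resolution of $K$ is $K$ itself with the one-step filtration $F^0G=G$, so $G/F^nG$ is either all of $G$ or $0$. The cohomology of $G=K$ is the Koszul cohomology $\H(A//\underline{x})$, which is spread over several degrees whenever $A$ is (say) a non-regular local ring, and the quotient $0$ has ``cohomology concentrated in a single degree'' only vacuously. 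That vacuous reading cannot be the intended one: since $\RHom_K(K,K)\simeq K\in\Db(K)$ for \emph{every} $A$ as in \Cref{setup}, allowing $G/F^nG=0$ in (a) would make condition (2) hold unconditionally, so (2) could not imply (1); relatedly, the proof of \Cref{lem:koszulcomplex} collapses when $L=0$, since a product of zero copies of $\RHom_A(\H^0(A),N)$ gives no information. The fact that your witness makes (b) trivially true for arbitrary $A$ was the red flag. The paper's witness is instead $M=k$: any semifree resolution $G$ of the residue field has $G/F^{-1}G=G\simeq k$, which verifies (a), and (b) holds because $K$ is Gorenstein when $A$ is.

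Second, the transfer of the Gorenstein property between $A$ and $K=A//\underline{x}$ --- which you correctly flag as the main obstacle in the backward direction, and which is also needed in the ascent direction to give (b) for any legitimate forward-direction witness --- is left as a sketch in your proposal. The paper does not prove it from scratch either: it cites \cite[Theorem 4.11]{Shaul_Koszul} (see also \cite[Theorem 4.9]{Frankild2003}), which says precisely that $A$ is Gorenstein if and only if $A//\underline{x}$ is. So that gap is fillable by citation, and your sketched strategy (relating $\RHom_A(k,A)$ to $\RHom_K(k,K)$) is indeed in the spirit of how such results are proved; but as written, your argument completes neither implication: the forward direction fails outright, and the backward direction ends at an unproved descent statement.
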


\begin{proof}
    Assume $A$ is Gorenstein. By \cite[Theorem 4.11]{Shaul_Koszul} (see also \cite[Theorem 4.9]{Frankild2003}), $K$ is Gorenstein. By \Cref{def:Gorenstein}, $\RHom_K(k,K)\in \Db(K)$. Clearly $G/F^{-1}G$ has cohomology concentrated in a single degree and thus the proof of the forward implication is complete. Now assume the conditions of (2). By \Cref{lem:koszulcomplex}, $\injdim_K(K)<\infty$ i.e. $K$ is Gorenstein. By \cite[Theorem 4.11]{Shaul_Koszul}, $A$ is Gorenstein and the proof is complete.
\end{proof}

\textbf{Proof of (1) $\iff$ (3).}  For (1) $\implies$ (3), take $M=A$. It is clear that (3) $\implies$ (4) and we have shown that (4) implies (1).

\bibliographystyle{amsalpha}
\bibliography{references}
\Addresses
\end{document}